\newcommand*\linenomathpatch[1]{
   \expandafter\pretocmd\csname #1\endcsname {\linenomath}{}{}
   \expandafter\pretocmd\csname #1*\endcsname{\linenomath}{}{}
   \expandafter\apptocmd\csname end#1\endcsname {\endlinenomath}{}{}
   \expandafter\apptocmd\csname end#1*\endcsname{\endlinenomath}{}{}
 }
\newcommand*\linenomathpatchAMS[1]{%
    \expandafter\pretocmd\csname #1\endcsname {\linenomathAMS}{}{}%
    \expandafter\pretocmd\csname #1*\endcsname{\linenomathAMS}{}{}%
    \expandafter\apptocmd\csname end#1\endcsname {\endlinenomath}{}{}%
    \expandafter\apptocmd\csname end#1*\endcsname{\endlinenomath}{}{}%
}
\let\linenomathAMS\linenomathWithnumbers
\patchcmd\linenomathAMS{\advance\postdisplaypenalty\linenopenalty}{}{}{}
\let\linenomathAMS\linenomathNonumbers
\let\phi=\varphi
\newcommand{\cA}{\mathcal{A}}
\newcommand{\cS}{\mathcal{S}}
\def\NN{{\mathbb N}}
\let\epsilon\varepsilon
\newtheorem{theorem}             {Theorem}[section]
\newtheorem{lemma}   [theorem]   {Lemma}        
\newtheorem{conjecture}  [theorem] {Conjecture}   
\newtheorem{proposition}  [theorem] {Proposition}   
\newtheorem{corollary}  [theorem] {Corollary}
\newtheorem{proto-theo}[theorem] {Proto-Theorem}   
\theoremstyle{definition}
\newtheorem{definition}  [theorem] {Definition}   
\newenvironment{case}[1]
  {\innercustomthm}
  {\endinnercustomthm}
\theoremstyle{remark}
\newtheorem{observation}[theorem] {Observation}   
\newtheorem{remark}  [theorem] {Remark}   
\pgfplotsset{compat=1.18}
\begin{document}

\onehalfspace

\title[Monochromatic products in random sets]{Monochromatic products in random integer sets}
\author[Roger Lidón]{Roger Lidón}
\address{Departament de Matem\`atiques, Universitat Polit\`ecnica de Catalunya (UPC), Carrer de Pau Gargallo 14, 08028  Barcelona, Spain.}
\email{\{rlidon2006\,|\,dariogol.com\,|\,pmorrismaths\,|\,miquel.ortega9\}@gmail.com}
\author[Darío Martínez] {Darío Martínez}
\author[Patrick Morris]{Patrick Morris}
\author[Miquel Ortega]{Miquel Ortega}

\thanks{P. Morris was supported  by the European Union's Horizon Europe   Marie Sk{\l}odowska-Curie grant RAND-COMB-DESIGN - project number
101106032 {\euflag}. M.Ortega was supported by the project PID2023-147202NB-I00, funded by MICIU/AEI/10.13039/501100011033/, as well as the FPI grant PRE2021-099120.}
\date{\today}

\begin{abstract}
A well-known consequence of Schur's theorem is that for $r\in \NN$, if $n$ is sufficiently large, then any $r$-colouring of $[n]$ results in monochromatic $a,b,c\in [n]$ such that $ab=c$. In this paper we are interested in the threshold at which the binomial random set $[n]_p$ almost surely inherits this Ramsey-type property. In particular for $r=2$ colours,  we show that this threshold lies between $n^{-1/9-o(1)}$ and $n^{-1/11}$. Whilst analogous questions for solutions to (sets of) linear equations are now well understood, our work   suggests that both the behaviour of the thresholds and the proof methods needed to determine them differ substantially in the  non-linear setting. 
\end{abstract}

\maketitle

\section{Introduction}

A collection of (ordered) subsets $\cS\subset \NN$ is  \textit{partition-regular}\footnote{This definition is often stated in an infinitary fashion, asking for any $r$-colouring of $\NN$ to give monochromatic $S\in \cS$. When every member $S\in \cS$ is finite, these two formulations are equivalent by the so-called \textit{compactness principle}.} if for any $r\in \NN$,  any $r$-colouring of $[n]:=\{1,2\ldots,n\}$ with  $n\in \NN$ is sufficiently large,  results in a monochromatic set $S\in \cS$. Determining which arithmetic configurations lead to collections of sets  $\cS$ that are partition-regular is a major theme in modern combinatorics. The earliest result in this area of arithmetic Ramsey theory (and indeed in Ramsey theory as a whole) is due to Schur \cite{SchurUpperBound} over a century ago, who showed that the collection of all \textit{sums} $\{(a,b,c)\in \NN^3:a+b=c\}$  is partition-regular. In 1933, Rado \cite{rado1933studien} proved a far-reaching generalisation of Schur's result, characterising for which matrices $A$ the system of linear homogeneous equations $A\mathbf{x}=0$ has a solution set that is partition-regular.  This celebrated result in particular implies  both Schur's theorem and the famous theorem of  van der Waerden  \cite{van1927beweis} showing that for any $r$, the collection of $r$-term arithmetic progressions $\{(a,a+d,\ldots,a+(r-1)d):a,d\in \NN\}$ is partition-regular. Further work completely settled partition-regularity for collections of sets defined by finite sets of 
 equations, see e.g.\ the survey of Hindman \cite{Hindman_survey}. 

\subsection{Random sets of integers}  
In the 1990s, researchers started to strengthen statements showing partition-regularity  by exploring the Ramsey properties of \textit{sparse} host sets. In the context of monochromatic sums,   we define a set $A\subseteq \NN$ to be \textit{$r$-Schur} if any $r$-colouring of $A$ results in  monochromatic  $(a,b,c)\in A^3$ with $a+b=c$. When $r=2$, we simply refer to the set $A$ as being 
\emph{Schur}. Whilst Schur's theorem \cite{SchurUpperBound} guarantees that $[n]$ is Schur for all $n$ sufficiently large, it turns out that much sparser subsets of $[n]$ are typically also Schur. This was shown in pioneering work of Graham, R\"odl and Ruci\'nski \cite{SumThreshold}. The set $[n]_p$ denotes the binomial random subset of $[n]$ obtained by keeping each integer in $[n]$ with probability $p=p(n)$ independently and we say that a property $\cA\subseteq 2^{[n]}$ holds in $[n]_p$ asymptotically almost surely (a.a.s.\ for short) if the probability that it holds tends to 1 as $n$ tends to infinity. %Also for functions $f=f(n)$ and $g=g(n)$, $f\ll g$ is used to denote that $f/g$ tends to 0 as $n$ tends to infinity and $f\gg g$ denotes that the $f/g$ tends to infinity as $n$ tends to infinity.   

\begin{theorem}[\cite{SumThreshold}]
\label{thm:sum threshold} We have that
  \setcounter{equation}{-1}
  \begin{linenomath}
 \begin{numcases}
 {\lim_{n \to \infty} \Pr([n]_p  \mbox{ is  Schur}) =}
	   0 &if  $p=o(n^{-1/2})$;\\
	     1  &if $p= \omega(n^{-1/2}).$
 \end{numcases}
    \end{linenomath}
\end{theorem}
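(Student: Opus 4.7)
The plan is to prove the $0$- and $1$-statements separately, via an analysis of the \emph{Schur hypergraph} $H = H([n]_p)$ on vertex set $[n]_p$ whose edges are the Schur triples $\{a,b,c\} \subseteq [n]_p$ with $a+b=c$. One computes $\EE|V(H)| = np$ and $\EE|E(H)| = \Theta(n^2 p^3)$, and these two quantities coincide precisely at $p = n^{-1/2}$, which explains the location of the threshold.

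For the $0$-statement ($p = o(n^{-1/2})$), I would show that $H$ is a.a.s.\ $2$-colourable, which is precisely the statement that $[n]_p$ is not Schur. In this regime, Markov yields $|E(H)| = o(np) = o(|V(H)|)$ a.a.s., and the expected degree of each vertex of $[n]_p$ in $H$ is $O(np^2) = o(1)$. Consequently, by a first-moment computation over small connected $3$-uniform sub-hypergraphs realisable by Schur triples, a.a.s.\ every connected component of $H$ contains at most six edges. Since the Fano plane (with seven edges) is the smallest non-$2$-colourable $3$-uniform hypergraph, each component is individually $2$-colourable, and these component $2$-colourings combine to yield a $2$-colouring of $[n]_p$ with no monochromatic Schur triple. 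The most delicate part here is checking that dense sub-configurations do not appear, which amounts to bounding the number of integer solutions to systems of $\geq 7$ coupled Schur equations on few variables -- a manageable first-moment calculation.

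For the $1$-statement ($p = \omega(n^{-1/2})$), I would argue by contradiction: let $\chi \colon [n]_p \to \{R,B\}$ be a $2$-colouring with no monochromatic Schur triple, so that each colour class is a sum-free subset of $[n]_p$. I would then appeal to a container-style/stability result for sum-free subsets of $[n]$ (in the spirit of Alon--Balogh--Morris--Samotij): the family of sum-free subsets of $[n]$ can be covered by a small family $\cC$ of ``containers'', each close to an extremal sum-free set (essentially an interval $(n/2, n]$ or a union of arithmetic residue classes). Then $R \subseteq C_R \cap [n]_p$ and $B \subseteq C_B \cap [n]_p$ for some $C_R, C_B \in \cC$, and since $R \cup B = [n]_p$ one needs $C_R \cup C_B \supseteq [n]_p$. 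A union bound over pairs $(C_R, C_B) \in \cC^2$, weighted by $\Pr([n]_p \subseteq C_R \cup C_B)$, then yields a contradiction once $p = \omega(n^{-1/2})$, since no pair of near-sum-free containers has union close to $[n]$.

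The main obstacle is the $1$-statement, specifically quantifying both (a)~the supersaturation input that any $A \subseteq [n]$ with $|A| \geq (1/2 + \delta) n$ contains $\Omega_\delta(n^2)$ Schur triples, and (b)~the structural characterisation of sum-free containers and the careful control of the cardinalities of their pairwise unions. Both inputs rely on the additive structure of Schur's equation in a load-bearing way, and this is likely where the multiplicative setting of the present paper departs most substantially from the linear regime, as noted in the abstract.
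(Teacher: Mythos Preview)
The paper does not prove Theorem~\ref{thm:sum threshold}; it is quoted as background from Graham, R\"odl and Ruci\'nski~\cite{SumThreshold}, with no argument supplied. There is therefore nothing in the present paper against which to compare your proposal.

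That said, your $0$-statement sketch contains a genuine gap. The claim that a.a.s.\ every connected component of the Schur hypergraph has at most six edges is false: a loose path of $\ell$ Schur triples (each new edge sharing one vertex with its predecessor) has $2\ell+1$ vertices and $O(n^{\ell+1})$ realisations in $[n]$, so at $p = c\,n^{-1/2}$ with $c \to 0$ slowly its expected count is of order $c^{2\ell+1} n^{1/2}$, which need not be $o(1)$ for any fixed $\ell$. Large components therefore do appear. The standard argument does not bound component \emph{size} but component \emph{density}: one shows that a.a.s.\ every component is tree-like or unicyclic in the $3$-uniform sense, and then checks directly that all such sparse hypergraphs are $2$-colourable regardless of how many edges they have. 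The Fano-plane fact $m(3)=7$ is correct but is a red herring here. Your $1$-statement via containers is anachronistic relative to~\cite{SumThreshold} (which predates containers by two decades) but is a legitimate modern route, essentially the Nenadov--Steger argument the paper itself cites.
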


%We remark that standard arguments, see for example \cite[Section 1.4]{randomgraphbook}, imply the asymptotic equivalence  of the binomial random model of random integers and the random uniform model and so Theorem \ref{thm:sum threshold} can also be interpreted as saying that a uniformly random subset of $[n]$ of size $m\leq n$ is a.a.s.\ $r$-Schur if $m=\omega(n^{1/2})$ and a.a.s.\ not $r$-Schur if $m=o(n^{1/2})$. 

For any property $\cA\subseteq 2^{[n]}$, a function $p_{*}=p_*(n)$ is the\footnote{This is a slight abuse of notation as thresholds are not unique but rather determined up to constant factors.} \textit{threshold} for $\cA$ if whenever $p=\omega(p_*)$, we have that $[n]_p$ a.a.s.\ has the property $\cA$ whilst whenever $p=o(p_*)$, we have that $[n]_p$ a.a.s.\ does \textit{not} have the property $\cA$. Thus Theorem \ref{thm:sum threshold} establishes that $n^{-1/2}$ is the threshold for the Schur property. Further work established  a sparse random analogue of Rado's theorem \cite{rado1933studien}, determining the 
threshold for $[n]_p$ to contain monochromatic solutions  to sets of homogeneous equations $A{\bf{x}}=0$ in any colouring, for any $A$ such that the solution set is partition-regular. The $(0)$-statements  for these thresholds 
 as well as  $(1)$-statements for  a subclass of  so-called \textit{density-regular} matrices $A$  were proven by R\"odl and Ruci\'nski \cite{RadoPartition} who conjectured that their $(0)$-statements provided the correct threshold in all cases.  This took over a decade to verify with Friedgut, R\"odl and Schacht \cite{friedgut2010ramsey} finally providing a  proof for all $(1)$-statements.  The $(1)$-statements for all these Rado-type properties  in fact hold for any bounded number of colours $r$ and, perhaps surprisingly, the thresholds do not depend on the number of colours $r$. In particular, it follows from \cite{friedgut2010ramsey} that for \emph{any} $r\geq 2$, the threshold for $[n]_p$ to be $r$-Schur is at $n^{1/2}$. 
 
 Shortly after the breakthrough work of Friedgut, R\"odl and Schacht \cite{friedgut2010ramsey} providing $(1)$-statements for the random Rado theorem,  Conlon and Gowers \cite{2016.CG} and independently Schacht \cite{2016.Schacht} developed powerful \textit{transference principles} which establish optimal $(1)$-statements for a wide range of extremal and Ramsey properties in random discrete structures. In particular, these provided new proofs for some of  $(1)$-statements for Rado-type problems discussed above. By adopting an abstract viewpoint of independent sets in hypergraphs, Balogh, Morris and Samotij \cite{2015.BMS} and independently Saxton and Thomason \cite{2015.ST} then developed the theory of \textit{hypergraph containers} which encapsulated many of the applications of the previous transference principles and led to an extremely flexible and elegant tool for establishing $(1)$-statements in random discrete structures. In particular, Nenadov and Steger \cite{nenadov2016short} showed that one can use hypergraph containers to establish thresholds for Ramsey properties. This has led to a golden age in random Ramsey theory with many recent advances. Restricting only to the arithmetic setting here,  we mention thresholds for solutions with repeated entries to Rado-type equations  \cite{spiegel2017note}, resilience versions of Rado thresholds \cite{hancock2019independent}, perturbed Schur thresholds  \cite{aigner2019monochromatic,das2024schur}, asymmetric random Rado theorems \cite{aigner2023asymmetric,hancock2022asymmetric}, sharp thresholds for Schur and van der Waerden  \cite{friedgut2015sharp,friedgut2022sharp,schulenburg2016threshold}, thresholds for Rado type properties in random subsets of the primes and abelian groups \cite{freschi2024typical} and canonical van der Waerden thresholds \cite{alvarado2024canonical}.

\subsection{Monochromatic products} All of the random arithmetic Ramsey results mentioned above concern solutions to linear equations. In this paper, we make a first foray into exploring non-linear equations by considering monochromatic products. The fact that the set\footnote{We omit the integer 1 here to avoid trivialities.} $\{(a,b,c)\in ([n]\setminus\{1\})^3:ab=c\}$ is partition-regular can be derived from Schur's theorem for sums by considering powers of 2. In general, we say a set $A\subseteq \NN$ is $r$-product-Schur if any $r$-colouring of $S$ results in monochromatic $(a,b,c)\in A^3$ with $ab=c$ and when $r=2$, we will just say that $S$ is product-Schur. As the property of being product-Schur is monotone increasing\footnote{That is, if $A$ is product-Schur and $A\subseteq B$ then $B$ is also product-Schur.}, the Bollob\'as-Thomason theorem \cite{bollobas1987threshold} implies that there exists a threshold for the property. Our main theorem gives bounds on this threshold. 

\begin{theorem} \label{thm: 2 col}
    We have that
  \setcounter{equation}{-1}
  \begin{linenomath}
 \begin{numcases}
 {\lim_{n \to \infty} \Pr([n]_p  \mbox{ is  product-Schur} ) =}
	   0 &if $p\leq n^{-1/9-o(1)} $;\\
	     1  &if $p\geq\omega(n^{-1/11})$.
 \end{numcases}
    \end{linenomath}
\end{theorem}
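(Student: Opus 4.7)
The plan is to prove the upper and lower bounds by complementary techniques.

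\textbf{The $(1)$-statement ($p = \omega(n^{-1/11})$).} Let $\cH$ denote the 3-uniform hypergraph on $[n]\setminus\{1\}$ whose edges are the product triples $\{a,b,ab\}$ with $ab \leq n$ and $a,b \geq 2$; being product-Schur on $[n]_p$ is equivalent to $[n]_p$ not admitting a partition into two $\cH$-independent (product-free) sets. My plan is to apply the hypergraph container method in the Nenadov--Steger style~\cite{nenadov2016short}, first regularising $\cH$ to control the very uneven degrees (the degree of $c \in [n]$ is the divisor count $d(c)$, which varies widely over $[n]$ from roughly $\log n$ on average to $n^{O(1/\log \log n)}$ at extremes). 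A suitable container lemma would then produce a family $\cC$ of nearly product-free containers with $\log|\cC|$ sufficiently small; a union bound over pairs $(C_1, C_2) \in \cC^2$, together with a Cameron--Erd\H{o}s-type bound on the size of product-free subsets of $[n]$, would show that a.a.s.\ no product-Schur-free 2-colouring of $[n]_p$ exists. The exponent $1/11$ should emerge from an optimisation over densities of substructures of the regularised $\cH$.

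\textbf{The $(0)$-statement ($p \leq n^{-1/9-o(1)}$).} A uniformly random 2-colouring of $[n]_p$ leaves $\Theta(n \log n \cdot p^3)$ monochromatic triples in expectation, which is super-constant already at $p = n^{-1/9}$; so a deterministic colouring is necessary. I would construct a 2-colouring based on logarithmic intervals: partition $[n]$ into $I_k = (n^{(k-1)/K}, n^{k/K}]$ for $k \in [K]$, and colour each $I_k$ according to an auxiliary $\chi \colon [K] \to \{0,1\}$. A product triple $(a,b,ab)$ with $a \in I_i$, $b \in I_j$ satisfies $ab \in I_{i+j-1} \cup I_{i+j}$, so monochromatic triples arise only from monochromatic \emph{index types} $(i,j,k)$ with $k \in \{i+j-1, i+j\}$, and such a type contributes $\Theta(n^{k/K})$ product triples. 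The first-moment method then demands that every monochromatic index type has $k$ sufficiently small, so that the total monochromatic count is $o(p^{-3})$; achieving $p \leq n^{-1/9-o(1)}$ requires all mono types to have $k$ at most roughly $K/3$.

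\textbf{Main obstacle.} I expect the 0-statement to be the harder direction. By Schur's theorem, any 2-colouring of $[K]$ with $K \geq 5$ contains a monochromatic Schur triple at the index level, so some mono index type is unavoidable; the challenge is to choose $K$ and $\chi$ so that \emph{every} mono type has small $k$. A direct case analysis shows that naive alternating colourings for $K = 9$ already fail at $k = 5$ (e.g., forcing a mono type of the form $(1,5,5)$ or $(3,3,5)$), so the construction must be more subtle, perhaps using a larger $K$, an asymmetric partition, or a combinatorial refinement that deletes or re-colours a sub-polynomial number of exceptional elements. Handling the boundary effects at the interval edges and the logarithmic factors from the divisor function should explain the $n^{o(1)}$ slack in the exponent $1/9-o(1)$. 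Closing the gap to the $(1)$-statement's $n^{-1/11}$ appears to require a deeper structural understanding of product-free subsets of $[n]$ than what current techniques provide, consistent with the authors' remark that the non-linear setting behaves substantially differently from the linear case.
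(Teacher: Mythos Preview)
Both directions of your proposal have genuine gaps that the paper explicitly addresses.

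For the $(1)$-statement, the hypergraph container method relies on \emph{supersaturation}: one needs that any $2$-colouring of a $(1-\varepsilon)$-dense subset of $[n]$ contains $\gg |\cH|$ monochromatic products, so that the union of two containers always misses a linear-sized subset of $[n]$. But supersaturation fails for products: there exist $2$-colourings of $[n]$ with only $\Theta(n^{1/2}\log n)$ monochromatic products out of $\Theta(n\log n)$ total (this is the Schur multiplicity result of \cite{aragao2023number}). The paper says so directly in the introduction: the container method ``crucially rel[ies] on supersaturation and thus seem[s] to be powerless in the setting of monochromatic products.'' Instead, the paper exhibits an explicit $14$-element configuration $S_{a,b,c,d}=\{a,b,ab,c,ac,abc,a^2bc,a^2b^2c,d,ad,bd,abd,a^2d,a^2bd\}$ which is itself product-Schur, places the parameters $a,b,c,d$ in carefully chosen intervals so that all $14$ elements lie in pairwise disjoint intervals of $[n]$, and then reveals $[n]_p$ interval-by-interval to find such a configuration a.a.s.\ once $p=\omega(n^{-1/11})$.

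For the $(0)$-statement, your interval colouring is exactly the argument the paper uses for general $r$ in Theorem~\ref{thm: more col}, and it is limited by the double-sum Schur number: since $S'(2)=5$, you cannot $2$-colour indices $\{1,\ldots,K\}$ avoiding monochromatic $(i,j,k)$ with $k\in\{i+j,i+j+1\}$ beyond $K=4$, so the interval scheme gives only $p\le n^{-1/5}$, well short of $n^{-1/9}$. Your proposed refinements (larger $K$, asymmetric partitions, recolouring sub-polynomial sets) cannot rescue this, because the obstruction is a Schur triple at the index level with $k$ near $K$. The paper instead runs a \emph{greedy} colouring: process $S=[n]_p$ in increasing order, colouring each $k$ red unless $k\in R\cdot R\cup R\cdot R\cdot R$, then blue unless $k\in B\cdot B$. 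Failure of this algorithm forces a specific ``forbidden configuration'' of up to $12$ elements (satisfying $ab=c=def$, $a=xyz$, $b=uvw$ with several structural constraints), and a substantial case analysis using divisor bounds shows that no such configuration appears a.a.s.\ when $p\le n^{-1/9}D(n)^{-100}$. The extra condition $k\notin R\cdot R\cdot R$ is essential---without it the method only reaches $n^{-1/8}$.
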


\begin{remark} \label{rem:degenerate}
    We remark that in our definition of being product-Schur we allow for \textit{degenerate} products $a^2=b$. Our (1)-statement in Theorem \ref{thm: 2 col} works in the slightly stronger setting of forcing non-degenerate monochromatic products. That is, if $p=\omega(n^{-1/11}$) then a.a.s.\ any $2$-colouring of $[n]_p$ will result in monochromatic $a,b,c\in [n]_p$ with $ab=c$ and $a\neq b$. 
\end{remark}

Theorem \ref{thm: 2 col} makes a first step towards answering a recent question of Mattos, Mergoni Cecchelli and Parczyk \cite{mattos2025product}. Their motivation came from a question of Prendiville at the 2022 British Combinatorial Conference (BCC), which asked for another strengthening of Schur's theorem to be generalised to the setting of products.  Answering a $\$100$ question of Graham \cite{SumThreshold}, Robertson and Zeilberger \cite{robertson19982} and independently Schoen \cite{schoen1999number} proved that any 2-colouring of $[n]$ gives $(\tfrac{1}{11}+o(1))n^2$ monochromatic sums  and that this is tight (their results are listed with the constant $1/22$ as they consider $(a,b,c)$ and $(b,a,c)$ as the same solution).  For more colours, determining asymptotically the minimum number of monochromatic sums is an intriguing open question. Prendiville asked how many monochromatic products are necessarily guaranteed when colouring $[n]\setminus \{1\}$. This was then addressed by Arag\~ao, Chapman, Ortega and Souza \cite{aragao2023number} and independently by  Mattos, Mergoni Cecchelli and Parczyk \cite{mattos2025product}. For two colours, the  first group \cite{aragao2023number} determined the minimum number of products precisely, showing that it is $(\tfrac{1}{2\sqrt{2}}+o(1))n^{1/2}\log n$. The second group \cite{mattos2025product} showed a lower bound of $n^{1/3-o(1)}$ for this problem and also initiated the study of other variations of Schur's theorem in the product setting. In particular, they determined precisely the threshold for the appearance of a product in the random set $[n]_p$ and asked the question of determining the threshold for the random set to be product-Schur. 

The collection of products in $[n]\setminus\{1\}$ behaves very differently to the collection of sums. Indeed there are $\Theta(n\log n)$ products which is much less than the $\Theta(n^2)$ sums in $[n]$, and the collection is less regular, with the number of products involving an integer $a\in [n]\setminus \{1\}$ depending heavily on the location of $a$ in the interval. 
In the Ramsey setting, the so-called \textit{Schur multiplicity} results discussed above also show a difference in behaviour between the sum and product case. Indeed for sums when $n$ is large we have that a positive fraction of all sums are monochromatic for any 2-colouring of $[n]$,  an example of a phenomenon known as \textit{supersaturation}. For products on the other hand, there are colourings in which only a vanishing fraction of the $\Theta(n\log n)$ products are monochromatic. Given this, it is perhaps not surprising that the proof of our (1)-statement for Theorem \ref{thm: 2 col} differs from existing proofs in random Ramsey theory. Indeed, the transference principles \cite{2016.CG,2016.Schacht} and the hypergraph container method \cite{2015.BMS,2015.ST} discussed above crucially rely on supersaturation and thus seem to be powerless in the setting of monochromatic products. To prove our (1)-statement, we instead appeal to a certain parameterised family of small configurations which have the product-Schur property and show that a.a.s.\ one of these configurations will appear in our random set $[n]_p$. This is reminiscent of the proof of the $(1)$-statement for Schur's theorem in the randomly perturbed model \cite{aigner2019monochromatic,das2024schur} when starting with a dense set of integers. The configurations we use are also similar (in fact contain) those used in \cite{aragao2023number} to count monochromatic solutions.  Known proof methods establishing $(0)$-statements for random Ramsey results also seem ineffective in the product setting and we establish the (0)-statement of Theorem \ref{thm: 2 col} through a careful greedy colouring procedure. 

\subsection{More colours}  We also give initial bounds on thresholds 
for the random set to be $r$-product-Schur for $r\geq 3$. In particular, these
bounds show that, unlike all of the thresholds  \cite{friedgut2010ramsey} for solutions to sets of linear equations as in Rado's theorem, for monochromatic products, the threshold depends on the number of colours. To state our results, we introduce some definitions. The \textit{Schur number} $S(r)$ is the minimum $n$ such that  any $r$-colouring of $[n]$ results in a monochromatic sum. The \textit{double-sum Schur number} $S'(r)$ is the minimum $n$ such that any $r$-colouring of  $[n]$ results in either  a monochromatic sum $(a,b,c)\in [n]^3$ with $a+b=c$ or a  monochromatic \textit{shifted sum}  $(a,b,c)\in [n]^3$ with $a+b=c-1$. Double-sum Schur numbers were introduced by Abbott and Hanson \cite{AbbottHanson} and also feature in the work of Mattos, Mergoni Cecchelli and Parczyk \cite{mattos2025product} bounding  the size of the largest $r$-product-Schur subset of $[n]$. 

\begin{theorem} \label{thm: more col}
    For any $2\leq r\in \NN$ we have that
  \setcounter{equation}{-1}
  \begin{linenomath}
 \begin{numcases}
 {\lim_{n \to \infty} \Pr([n]_p  \mbox{ is  $r$-product-Schur} ) =}
	  0 &if $p=o(n^{-1/S'(r)}) $;\\
	  1  &if $p=\omega(n^{-1/S(r)^2})$.
 \end{numcases}
    \end{linenomath}
\end{theorem}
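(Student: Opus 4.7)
For the (1)-statement, my plan rests on the observation that if $[n]_p$ contains the set $G_a := \{a, a^2, \dots, a^{S(r)}\}$ for some integer $a \geq 2$, then $[n]_p$ is automatically $r$-product-Schur. Indeed, any $r$-colouring of $G_a$ induces an $r$-colouring of the exponents $\{1, 2, \dots, S(r)\}$, which by the definition of the Schur number must contain a monochromatic triple $i+j=k$, yielding the monochromatic product $a^i \cdot a^j = a^k$. It therefore suffices to prove that some such $G_a$ with $a \leq n^{1/S(r)}$ lies in $[n]_p$ a.a.s. Writing $X_a$ for the indicator of this event and $\mu := \EE[\sum_a X_a]$, one has $\mu = \Theta(n^{1/S(r)} p^{S(r)})$, which tends to infinity precisely when $p = \omega(n^{-1/S(r)^2})$. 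I would then apply Janson's inequality, noting that the dependent pairs $(a,b)$ are exactly those for which $a$ and $b$ are both powers of a common integer $c$; parameterising such pairs via $c$ and summing shows that the dependency sum $\Delta$ satisfies $\Delta = o(\mu)$, whence $\sum_a X_a \geq 1$ a.a.s.

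For the (0)-statement, I would construct an explicit $r$-colouring based on logarithms. Set $\ell := S'(r)$ and choose $b := \lceil n^{1/\ell} \rceil + 1$, so that $b^\ell > n$ and hence $\lfloor \log_b x \rfloor \in \{0, 1, \dots, \ell - 1\}$ for every $x \in [n]$. The expected size of $[n]_p \cap [1, b]$ is $bp = O(n^{1/\ell} p) = o(1)$, so by Markov's inequality a.a.s.\ every element of $[n]_p$ exceeds $b$, forcing $\lfloor \log_b x \rfloor \in [\ell - 1]$. By the definition of $S'(r)$, there exists an $r$-colouring $\psi$ of $[\ell - 1]$ with no monochromatic sum $v + w = u$ and no monochromatic shifted sum $v + w = u - 1$. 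I then set $\chi(x) := \psi(\lfloor \log_b x \rfloor)$. For any product $y \cdot z = x$ in $[n]_p$, the identity $\log_b x = \log_b y + \log_b z$ combined with the floor function forces $\lfloor \log_b x \rfloor \in \{\lfloor \log_b y \rfloor + \lfloor \log_b z \rfloor,\, \lfloor \log_b y \rfloor + \lfloor \log_b z \rfloor + 1\}$, i.e., the triple of floors forms a sum or shifted sum in $[\ell - 1]$. Since $\psi$ avoids monochromatic triples of both kinds (in particular those arising from the degenerate case $y = z$), $\chi$ has no monochromatic product, proving that $[n]_p$ admits a valid $r$-colouring.

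The main subtlety, such as it is, lies in the accounting of the dependency term $\Delta$ in the Janson calculation for the (1)-statement; by contrast, the (0)-statement is essentially immediate once one identifies the correct logarithmic colouring and exploits the definition of $S'(r)$, which is designed precisely to absorb the ambiguity introduced by the floor function.
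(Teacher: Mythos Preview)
Your proposal is correct and, for the (0)-statement, essentially identical to the paper's: the paper colours the dyadic-style intervals $I_i=(n^{i/S'(r)},n^{(i+1)/S'(r)}]$ via a sum-and-shifted-sum-free colouring $\psi$ of $[S'(r)-1]$, which is exactly your $\chi(x)=\psi(\lfloor\log_b x\rfloor)$ in different notation.

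For the (1)-statement the idea is the same---force one geometric progression $\{a,a^2,\dots,a^{S(r)}\}$ into $[n]_p$---but the execution differs. The paper avoids Janson entirely: it observes that for each $x$ there are at most $S(r)^2$ values $y$ with $U_x\cap U_y\neq\varnothing$ (since $x^i=y^j$ has at most one integer solution $y$ for each of the $S(r)^2$ pairs $(i,j)$), greedily extracts a pairwise \emph{disjoint} subfamily of size at least $n^{1/S(r)}/S(r)^2$, and then uses straight independence to bound the probability that none of these appears. Your Janson route also works, and indeed the same $S(r)^2$-degree observation gives $\Delta\le S(r)^2\, n^{1/S(r)} p^{S(r)+1}=S(r)^2 p\,\mu$; note however that this only yields $\Delta=o(\mu)$ when $p=o(1)$, while for constant $p$ you get $\Delta=O(\mu)$, which still suffices for Janson but is not quite what you wrote. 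The paper's disjoint-family argument is marginally cleaner in that it sidesteps this case distinction and any appeal to Janson, whereas your approach has the advantage of being the more mechanical, off-the-shelf calculation.
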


One can check by hand or by computer that $S'(2)=S(2)=5$ and $S'(3)=S(3)=15$.
The bounds given by Theorem \ref{thm: more col} are thus weaker than those from
Theorem \ref{thm: 2 col} for two colours, whilst for three colours Theorem
\ref{thm: more col} shows that the threshold for being $3$-product-Schur lies
between $n^{-1/14}$ and $n^{-1/196}$, and in particular is larger than the
threshold for being $2$-product Schur. In fact, for a small number of colours
we can do slightly better than the bound given by Theorem \ref{thm: more col}
(0), showing that the threshold for being $3$-product
Schur is at least $n^{-1/19}$, see Remark \ref{rmrk}. In general, the values of $S'(r)$ and $S(r)$ may differ, for example $S'(4)=41<45=S(4)$ but we have that  $3S(r-1)-1\leq S'(r)\leq S(r)$ as shown by Abbott and Hanson \cite[Theorem 4.1]{AbbottHanson}. Calculating (double-sum) Schur numbers is extremely challenging already for $r=5$  \cite{heule2018schur} and the best general bounds give that $\Omega(3.17176^r)\leq S(r)\leq r!e$  with the lower bound due to Exoo \cite{SchurLowerBound} and the upper bound already shown by Schur himself \cite{SchurUpperBound}. Theorem \ref{thm: more col} thus gives that if $p^*_r$ is the threshold for $[n]_p$ being $r$-product Schur, then $n^{-1/c_1^r}\leq p_r^*\leq n^{-1/c_2^{r\log r}}$ for some constants $c_1,c_2>0$.  

\subsection*{Organisation} We prove the (0)-statement of Theorem \ref{thm: 2 col} in Section \ref{sec:0-statement} and the (1)-statement in Section \ref{sec:1-statement}. We then prove Theorem \ref{thm: more col} in Section \ref{sec:more cols} and give some concluding remarks in Section \ref{sec:conclude}. 

\subsection*{Notation}
 We write $[n] \coloneq \{1, \dots, n\}$ and $(m, n] \coloneq \{m+1, \dots, n\}$ for $m ,n \in \mathbb{N}$. Given two sets $A, B \subset [n]$, we write $A \cdot B = \{ab \colon a \in A, b \in B\}$ for the set of pairwise products, and likewise for longer products.  We let $d \colon \mathbb{N} \to \mathbb{N}$ count the number $d(i)$ of divisors of $i \in \mathbb{N}$, and write $D(n) = \max_{1 \leq i \leq n} d(i)$ for the maximum of this function in $[n]$. 
 We say an event occurs asymptotically almost never if it a.a.s.\ does not occur.
 Finally, we omit floors and ceilings when they are not necessary and all logarithms are in the natural base. 
%TODO Check all ks are rs for colours

\section{The (0)-statement for two colours} \label{sec:0-statement}
Our goal in this section is to give a proof of the (0)-statement
of Theorem \ref{thm: 2 col}. We first state a more precise version of the
theorem. Define
\[
    \omega(n) \coloneq D(n)^{100},
\]
The precise exponent of
$D(n)$ here is not relevant, we just take it large enough to give us enough
spare room in later bounds. The divisor bound tells us that $D(n) = n^{o(1)}$
(see, for example, Theorem 13.12 in \cite{polylogbound}), so the (1)-statement is a consequence of the following.
\begin{theorem} \label{thm:2_col_0_statement}
    If $p\leq n^{-1/9}\omega(n)^{-1}$, then
a.a.s. we have that $[n]_p$ is not product-Schur.
\end{theorem}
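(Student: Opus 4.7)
The plan is to show that, a.a.s., the random set $[n]_p$ admits a 2-colouring with no monochromatic triple $(a, b, c)$ satisfying $ab = c$. The construction is a greedy procedure that enumerates the elements of $[n]_p$ in increasing order. Upon reaching $m \in [n]_p$, the procedure examines the (already-coloured) set of pairs
\[
\cP(m) := \{(a,b) \in [n]_p \times [n]_p : a \le b < m, \ ab = m\},
\]
and any pair in $\cP(m)$ whose two elements share a colour $c$ forbids $c$ for $m$. If at most one colour is forbidden, then $m$ receives an unforbidden colour (independently and uniformly at random when both are available); if both colours are forbidden the procedure declares an \emph{abort}. The task is to show that a.a.s.\ no abort ever occurs, so that the procedure outputs the desired colouring.

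An abort at $m$ produces two distinct pairs $(a_1, b_1), (a_2, b_2) \in \cP(m)$ whose entries are coloured $1$ and $2$ respectively. Unfolding the greedy forcings one step, both $a_2$ and $b_2$ must have been forced to colour $2$ by monochromatic pairs of smaller elements coloured $1$, producing in the generic case a \emph{witness configuration} of $9$ distinct integers
\[
m,\ a_1,\ b_1,\ a_2,\ b_2,\ u_{a_2},\ v_{a_2},\ u_{b_2},\ v_{b_2}\ \in\ [n]_p
\]
satisfying the multiplicative relations $a_1 b_1 = a_2 b_2 = m$, $u_{a_2} v_{a_2} = a_2$, and $u_{b_2} v_{b_2} = b_2$.

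The heart of the argument is a first-moment estimate. Iterated use of the divisor function bounds the number of such 9-element configurations in $[n]$ by $n \cdot D(n)^{4}$, and each appears inside $[n]_p$ with probability at most $p^{9}$, giving an expected count bounded by
\[
n \cdot D(n)^{4} \cdot p^{9} \ \le\ D(n)^{-896},
\]
where we used $p \le n^{-1/9}\omega(n)^{-1}$ and $\omega(n) = D(n)^{100}$. Since $D(n) \to \infty$, this tends to $0$, so by Markov's inequality generic aborts a.a.s.\ do not occur.

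The main obstacle is the case analysis of \emph{degenerate} witness configurations, in which several of the nine integers coincide (e.g.\ $u_{a_2} = u_{b_2}$), so the witness consists of only $k < 9$ distinct elements and therefore appears in $[n]_p$ with the larger probability $p^{k}$. Each coincidence, however, forces a tight multiplicative relation (for instance, a shared divisor produces a square factor $d^{2} \mid m$), which cuts the count of admissible configurations by a compensating divisor-function factor. The deliberately large exponent $100$ in the definition $\omega(n) = D(n)^{100}$ is precisely the slack needed to absorb the polynomial-in-$D(n)$ loss in each of the finitely many degeneracy patterns, so the first moment remains $o(1)$ after summing over all of them. A union bound over $m$ together with Markov's inequality then completes the proof.
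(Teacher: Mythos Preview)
Your overall strategy---greedy colouring followed by a first-moment bound on the witness configurations for an abort---is exactly the paper's approach, and the generic nine-element count is correct. But the argument as written does not reach $n^{-1/9}$; the paper explicitly identifies the obstruction.

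A minor point first: with random tie-breaking, $a_2$ and $b_2$ need not have been \emph{forced} into colour $2$; they could have received it by chance, in which case no grandchildren $u_{a_2},v_{a_2}$ exist. This is easily repaired by always preferring colour $1$ when both are available, so assume that.

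The real gap is in the degeneracy analysis. Consider the coincidence $u_{a_2}=a_1$ (both carry colour $1$, so this is consistent). The eight surviving integers
\[
m,\ a_1,\ b_1,\ a_2,\ b_2,\ v_{a_2},\ u_{b_2},\ v_{b_2}
\]
are determined by the free choices $m$, $a_2\mid m$, $a_1\mid a_2$, $u_{b_2}\mid b_2$, so the number of such configurations in $[n]$ is still $n\cdot D(n)^{O(1)}$. Crucially, no square factor of $m$ is forced: from $a_2=a_1v_{a_2}$ and $m=a_2b_2$ one only obtains $b_1=v_{a_2}u_{b_2}v_{b_2}$, a product of three generically distinct integers. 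Hence the first moment for this eight-element pattern is
\[
n\,D(n)^{O(1)}\,p^{8}\ \ge\ n^{1/9-o(1)},
\]
which diverges. No choice of exponent in $\omega(n)=D(n)^{100}$ can absorb a polynomial factor of $n$, so your claimed compensation fails here.

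The paper fixes this by making the greedy algorithm \emph{asymmetric}: an element enters $R$ only if it lies in neither $R\cdot R$ nor $R\cdot R\cdot R$. This extra triple-product check kills precisely the pattern above, since $b_1=v_{a_2}u_{b_2}v_{b_2}\in R\cdot R\cdot R$ could then never have been placed in $R$. The resulting forbidden configurations acquire an extra layer (the variables $f,z,w$), and the case analysis becomes substantially longer (fourteen cases), but it does go through at $n^{-1/9}$. With the symmetric two-factor greedy you propose, the analysis only delivers the weaker bound $p\le n^{-1/8-o(1)}$, as the paper remarks.
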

In order to prove Proposition \ref{thm:2_col_0_statement} we must show that, with
high probability, we may partition our random set $S \sim [n]_p$ into two disjoint colours
$B, R$ so that $S = B \cup R$ and $B, R$ are both product-free. We attempt to
build such a colouring using the following greedy algorithm.
%TODO definicio A\cdotB??

\algrenewcommand\algorithmicrequire{\textbf{Input:}}
\begin{algorithm}[H]
\caption{Colouring algorithm for two-coloured case}\label{alg1}
\begin{algorithmic}[1]
 % $S \subset \mathbb{Z}, |S| < \infty$
    \Require $S \subset \mathbb{Z}, |S| < \infty$
    % \Function{Colour}{S}
	\State $B \gets \varnothing$
	\State $R \gets \varnothing$
	\For{$k \in S$ in increasing order}
	    \If{$k\not \in (R\cdot R) \cup (R\cdot R\cdot R)$}
	    \State $R \gets R\cup \{k\}$
	    \ElsIf{$k \not \in B\cdot B$}
	    \State $B \gets B\cup \{k\}$
	    \Else
		\State Algorithm fails
	    \EndIf
	\EndFor
	% \Return True
    % \EndFunction
\end{algorithmic}
\end{algorithm}

As long as the algorithm does not fail, we obtain such a colouring.
\begin{observation}
    \label{lemma:success_greedy}
    If Algorithm \ref{alg1} terminates successfully on a set $S \subset [n]$,
    then $S$ is not product-Schur.
\end{observation}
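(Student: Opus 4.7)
The plan is to verify directly from the algorithm's branching conditions that a successful run outputs a partition $S = B \sqcup R$ in which neither colour class contains a monochromatic product triple, so that the resulting 2-colouring witnesses that $S$ is not product-Schur.

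The first step is to rule out monochromatic products in $B$. Suppose for contradiction that $a, b, c \in B$ satisfy $ab = c$; by the paper's convention that triples omit the integer $1$, we may assume $a, b \geq 2$, so that $c = ab > \max(a, b)$. Since the algorithm processes elements of $S$ in increasing order, $c$ is considered strictly after both $a$ and $b$, and at that moment both of these already lie in $B$. Hence $c \in B \cdot B$ at the time $c$ is processed. But $c$ was added to $B$, which the algorithm only does when $c \notin B \cdot B$ --- a contradiction.

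The second step handles $R$ by the same logic. If $a, b, c \in R$ with $ab = c$, then when $c$ is processed, $a$ and $b$ are already present in $R$, so $c \in R \cdot R$ at that moment. This contradicts the fact that $c$ passed the check $k \notin (R \cdot R) \cup (R \cdot R \cdot R)$ required for insertion into $R$. Note that only the $R \cdot R$ half of this condition is being used here; the stronger $R \cdot R \cdot R$ constraint is extra and will presumably become relevant in subsequent parts of the proof of Theorem \ref{thm:2_col_0_statement}, when one needs to show that the algorithm succeeds on $[n]_p$ with high probability.

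There is no substantial obstacle: the observation is essentially an unpacking of the algorithm's if-tests, and the argument is symmetric between the two colours up to the strengthening of the red condition. The only minor subtlety is the role of the integer $1$, which is sidestepped by the paper's convention of excluding $1$ from product-Schur triples --- any placement of $1$ in $B$ or $R$ cannot create a monochromatic product in the sense being tracked, since such a triple would have a coordinate equal to $1$.
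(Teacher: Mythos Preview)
Your proof is correct and follows essentially the same approach as the paper's own proof, which simply observes that the algorithm only adds an element to $B$ or $R$ when it is not a product of two smaller elements already in that set, so both sets are product-free. Your version is more detailed (making explicit the increasing-order processing, the role of $1$, and the fact that only the $R\cdot R$ half of the red condition is needed here), but the underlying argument is identical.
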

\begin{proof}
Since we only add a new element
to $B$ or $R$ if it is not the product of two smaller elements in the set, $B$
and $R$ will be product-free, and if the algorithm successfully terminates we
also have that every integer in $S$ receives a colour.
\end{proof}
Hence, to prove Theorem \ref{thm:2_col_0_statement} it suffices to see that the
algorithm successfully terminates a.a.s.. We now look for
possible causes for the failure of our algorithm, which turn out to be the
following configurations of integers (see Figure
\ref{fig:forbidden_configuration} for a diagram of how such configurations look
like).
\begin{definition}
    \label{def:forbidden_config}
    We say a tuple $M = (a,b,c,d,e,f,x,y,z,u,v,w)$ is a \emph{forbidden
    configuration} in $S$ if it satisfies the following:
    \begin{enumerate}[label=(\roman*)]
        \item The equalities $c = ab = def$, $a = xyz$, and $b = uvw$ hold.
        \item We have that $a,b,c,d,e,x,y,u,v \in S$, and $f,z,w \in S\cup\{1\}$.
        \item $a$ and $b$ are different from $c,d,e,f,x,y,z,u,v,w$.
        \item If we denote $A = \{x,y,z,d,e,f,u,v,w\}\backslash\{1\}$, then $A$ is disjoint to $A\cdot A$ and $A\cdot A\cdot A$.
    \end{enumerate}
\end{definition}
\begin{figure}
    \centering
    \includegraphics[width=0.4\linewidth]{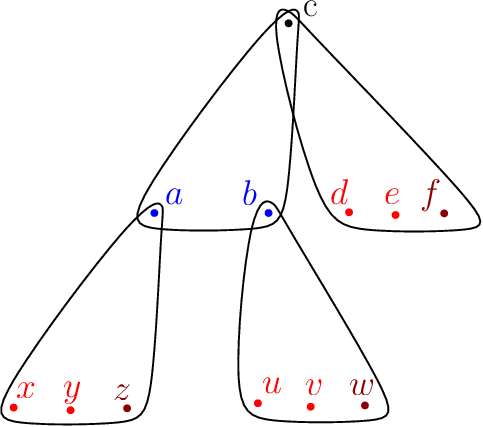}
    \caption{Situation in which Algorithm \ref{alg1} fails. The upper vertex of a triangle is the product of the lower ones, and $z,w,f$ may be either red or one.}
    \label{fig:forbidden_configuration}
\end{figure}
The following lemma tells us that these are the only possible
obstructions for the succesful termination of the algorithm.

\begin{lemma}\label{petaelgreedy}
    If Algorithm \ref{alg1} fails for a set $S$ with $1 \not \in S$, there exists a forbidden configuration
    in $S$.
\end{lemma}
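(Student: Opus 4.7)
The plan is to directly read off a forbidden configuration from the moment the algorithm fails. Suppose Algorithm~\ref{alg1} fails while processing some $c \in S$, so that both the condition on line~4 and the condition on line~6 failed. Then at that moment $c$ lies in $(R\cdot R)\cup(R\cdot R\cdot R)$ and in $B\cdot B$. I would choose $a, b \in B$ with $ab = c$ and $d, e, f$ with $def = c$, more precisely taking $d, e \in R$ and either $f \in R$ (if $c \in R\cdot R\cdot R$) or $f = 1$ (otherwise), which matches the requirement $f \in S \cup \{1\}$. Since $a, b$ themselves belong to $B$, each of them previously failed the condition on line~4, so I would similarly write $a = xyz$ and $b = uvw$ with $x, y, u, v \in R$ and $z, w \in R \cup \{1\}$. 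This produces the candidate $12$-tuple, and properties (i) and (ii) of Definition~\ref{def:forbidden_config} hold by construction.

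The bulk of the proof consists of verifying (iii) and (iv). For (iii), I would argue that both $a$ and $b$ differ from each of the ten other entries via the colour dichotomy: the entries $d, e, x, y, u, v$ lie in $R$ while $a, b \in B$; the entries $f, z, w$ lie in $R \cup \{1\}$, while $a, b \neq 1$ (using $1 \notin S$) and $a, b \notin R$. The remaining comparisons $a, b \neq c$ follow from $c = ab$ together with $a, b \geq 2$.

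For (iv), since $A \subseteq R$, it suffices to show that $R$ itself contains no element expressible as a product of two or three of its own elements. This is where the algorithm's greediness and the increasing-order processing come in: because $1 \notin S$, every factor in a putative relation $r = r_1 r_2$ (or $r = r_1 r_2 r_3$) is at least $2$ and hence strictly smaller than $r$, so all factors already lie in $R$ by the time $r$ is examined, contradicting the condition on line~4 that would have blocked $r$ from entering $R$.

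I do not anticipate a serious obstacle; the proof is essentially a careful readout of the failure event combined with bookkeeping. The one place where the algorithm's specifics are genuinely used is the product-freeness of $R$, which rests on the increasing-order processing together with $1 \notin S$.
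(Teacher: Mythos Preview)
Your proposal is correct and follows essentially the same approach as the paper's proof: extract $a,b\in B$ and $d,e\in R$, $f\in R\cup\{1\}$ from the failure at $c$, then unwind $a,b\in B$ to obtain $x,y,z,u,v,w$, and verify (iii) via the colour partition and $1\notin S$ and (iv) via the product-freeness of $R$ coming from the increasing-order processing. If anything, you are slightly more explicit than the paper in handling $a,b\neq c$ and the possibility $z,w=1$.
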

\begin{proof}
    % This follows from inspecting the situation in which \cref{alg1} fails to terminate.
    Suppose the algorithm fails to colour $c \in S$. This means that $c$ can be
    written both as $c=ab$ with $a,b\in B$, and as $c = def$ with $d,e\in R$ and
    $f\in R\cup\{1\}$. Notice too that $\{a,b\}\cap \{d,e,f\} =
    \varnothing$, since the algorithm guarantees that $B \cap R = \varnothing$,
    and by assumption $1 \not \in B \subset S$.

    As $a,b\in B$, the definition of the algorithm implies that $a, b \in R\cdot
    R \cup R\cdot R \cdot R$, so there exist $x,y,u,v \in R$, $z,w \in
    R\cup\{1\}$ such that $a = xyz$ and $b=uvw$. Again, each of these six
    numbers must be distinct from $a$ and $b$ because of their different
    colours.

    We claim that the tuple $(a, b, c, d, e, f, x, y, z, u, v, w)$ is a
    forbidden configuration in $S$. In fact, we have already seen properties (i)
    to (iii) in Definition \ref{def:forbidden_config}, and it only remains to
    check that condition (iv) holds. Since in Algorithm $\ref{alg1}$ we only add
    a new element to $R$ if it is not the product of two or three smaller terms
    in $R$, we can guarantee that $R \cap (R \cdot R) = \varnothing$ and $R \cap
    (R\cdot R \cdot R) = \varnothing$. Writing $A = \left\{ x, y, z, d, e, f, u,
    v, w \right\} \setminus \left\{ 1 \right\}$ as in condition (iv), we have
    that $A \subset R$, so $A \cap (A\cdot A) = \varnothing$ and $A \cap (A\cdot
    A \cdot A) = \varnothing$.
\end{proof}

\subsection{Lack of forbidden configurations}
The previous discussion reduces the proof of Theorem \ref{thm:2_col_0_statement}
to the following key proposition, to which we dedicate the rest of the section.

\begin{proposition}
    \label{lem:NoForbidden_v1}
    For   $n\in \NN$,  a function $p=p(n) \leq n^{-1/9}\omega(n)^{-1}$, and $S \sim [n]_p$ we have that a.a.s.\ there is not a forbidden configuration in $S$.
\end{proposition}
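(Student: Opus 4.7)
The statement as worded---a.a.s.\ there \emph{is} a forbidden configuration---together with Lemma~\ref{petaelgreedy} would imply that Algorithm~\ref{alg1} a.a.s.\ \emph{fails}, which runs opposite to the goal of Theorem~\ref{thm:2_col_0_statement}: we need the algorithm to \emph{succeed} a.a.s.\ so that $[n]_p$ admits a product-free $2$-colouring (and is thus not product-Schur). I therefore read Proposition~\ref{lem:NoForbidden_v1} in context as: a.a.s.\ there is \emph{no} forbidden configuration in $S$, and outline a first-moment proof of that intended version.

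Let $X$ be the number of tuples $M=(a,b,c,d,e,f,x,y,z,u,v,w)$ forming a forbidden configuration in $S\sim[n]_p$. The plan is to show $\mathbb{E}[X]=o(1)$, so that Markov yields $\Pr[X\geq 1]=o(1)$ and forbidden configurations occur asymptotically almost never. Conditions (iii)--(iv) of Definition~\ref{def:forbidden_config} only restrict the count, so I drop them for an upper bound. From condition (i), the nine atoms $x,y,z,u,v,w,d,e,f$ determine $a=xyz$, $b=uvw$, $c=ab=def$. I would split the expectation according to which of the flexible entries $z,w,f$ equal $1$ versus lie in $S$; the worst case is $z=w=f=1$, since then only the nine ``core'' elements $a,b,c,d,e,x,y,u,v$ need to lie in $S$ (factor $p^9$), and each additional $z,w,f\in S$ contributes another factor of $p\leq n^{-1/9}\omega(n)^{-1}$, strictly improving the bound.

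In the worst case I would count $(x,y,u,v,d,e)\in[n]^6$ with $xyuv=de\leq n$. For fixed $m=xyuv\leq n$, the number of ordered quadruples of product $m$ is $\tau_4(m)\leq D(n)^{O(1)}$ and the number of ordered pairs is $d(m)\leq D(n)$, so summing over $m\leq n$ gives at most $n\cdot D(n)^{O(1)}$ atom-tuples, each contributing $p^9$. Hence
\[
\mathbb{E}\bigl[X_{z=w=f=1}\bigr]\;\leq\; n\cdot D(n)^{O(1)}\cdot p^9 \;\leq\; n\cdot D(n)^{O(1)}\cdot n^{-1}\, D(n)^{-900} \;=\; D(n)^{-\Omega(1)}\;=\;o(1),
\]
using $\omega(n)=D(n)^{100}$ together with $D(n)\to\infty$. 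The remaining seven sub-cases each have at least one further atom forced into $S$, which buys an extra factor $p\leq n^{-1/9}\omega(n)^{-1}$ while costing at most a polynomial-in-$D(n)$ factor in the divisor bookkeeping; the generous exponent $100$ in $\omega(n)=D(n)^{100}$ is chosen precisely so that each such case again yields $o(1)$. Summing the finitely many cases gives $\mathbb{E}[X]=o(1)$ overall. The only delicate part of the argument is this divisor bookkeeping across sub-cases; the exponent $1/9$ in $p$ matches the nine core elements of the configuration, so nothing needs to be tight.
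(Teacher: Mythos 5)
You are right that the statement is mis-worded and should say that a.a.s.\ there is \emph{no} forbidden configuration in $S$; your reading matches the intended meaning and the way the proposition is invoked. Your first-moment strategy is also the one the paper uses, and your computation for the generic case (nine distinct core elements, $z=w=f=1$) reproduces essentially verbatim the heuristic sketch the paper gives before warning that it ``is far from a proper proof.'' The genuine gap is that you never treat coincidences \emph{among the atoms themselves}. Your eight sub-cases only track which of $z,w,f$ equal $1$; but if two entries of the tuple coincide (say $a=b$, or $x=d$), then $\Pr(M\subset S)=p^{s}$ where $s$ is the number of distinct non-one entries, which can be as small as $3$. The factor $p^{9}$ in your display is simply not available for such tuples, and $n\,D(n)^{O(1)}p^{s}$ is no longer $o(1)$ once $s\le 8$. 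Showing that each coincidence pattern is either impossible or compensated by a much smaller count of tuples is the actual content of the proof; the paper needs fourteen cases, a preliminary reduction to configurations inside $(n^{1/9},n]$, and the counting devices of Lemma~\ref{ei} and Corollary~\ref{macro} to carry this out.

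Relatedly, discarding condition (iv) of Definition~\ref{def:forbidden_config} is not the harmless relaxation you claim. Consider tuples with $z=w=f=1$, $x=d$, and all other entries distinct: the relation \eqref{eq:relation_forbidden} becomes $e=yuv$, the effective size is $8$, and there are about $n\,D(n)^{O(1)}$ such tuples (every entry divides $c$), so the first-moment bound is of order $n\,D(n)^{O(1)}p^{8}\ge n^{1/9-o(1)}\to\infty$. These tuples are excluded from the count only because condition (iv) forbids $e\in A\cdot A\cdot A$ --- which is precisely why Algorithm~\ref{alg1} tests $k\notin R\cdot R\cdot R$, and the paper notes that without this safeguard the method only reaches $n^{-1/8}$. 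So condition (iv) must be retained and used actively (as in Cases~\ref{case:2.2}, \ref{case:2.3}, \ref{case:3.6} and \ref{case:3.7}); a first moment over all tuples satisfying (i)--(iii) alone diverges, and your argument as written does not go through.
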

\begin{proof}[Proof of Theorem \ref{thm:2_col_0_statement} assuming Proposition
    \ref{lem:NoForbidden_v1}]
    Run Algorithm \ref{alg1} on $S \sim [n]_p$. The algorithm will terminate
    successfully a.a.s.\ on account of Lemma \ref{petaelgreedy},
    Proposition \ref{lem:NoForbidden_v1} and the fact that $\Pr(1 \in S) = p = o(1)$, so the theorem follows from Observation
    \ref{lemma:success_greedy}.
\end{proof}
In fact, we further reduce it to the following, which will allow us to worry
only about forbidden configurations containing large enough elements. 
\begin{proposition}\label{lem:NoForbidden}
    For  $n\in \NN$, $p \leq n^{-1/9}\omega(n)^{-1}$, and $S \sim [n]_p$ we have that a.a.s.\ 
there is not a forbidden configuration in  $S
	\cap (n^{1/9}, n].$
\end{proposition}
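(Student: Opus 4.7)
The plan is a first-moment argument on the random variable $N$ counting forbidden configurations with every mandatory coordinate in $S \cap (n^{1/9}, n]$ and each optional coordinate $f, z, w$ either in $S \cap (n^{1/9}, n]$ or equal to $1$. By Markov's inequality it suffices to show $\EE[N] = o(1)$; since $S \sim [n]_p$, the probability that a fixed admissible tuple $M$ is realised equals $p^{k(M)}$, where $k(M)$ counts the distinct non-$1$ values among the coordinates of $M$.

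I would start with the \emph{generic} case in which all twelve coordinates are pairwise distinct and none of $f, z, w$ equals $1$, so $k(M) = 12$. Parameterising by $(a, b)$ with $c = ab$, the remaining coordinates split into three ordered three-factor factorisations, giving
\[
\#\{\text{generic tuples}\} \;\leq\; \sum_{\substack{a, b \in (n^{1/9}, n] \\ ab \leq n}} d_3(a)\, d_3(b)\, d_3(ab),
\]
where $d_3(m)$ counts ordered three-factor factorisations of $m$. Using $d_3(m) \leq D(m)^2 \leq n^{o(1)}$ and the fact that there are $O(n \log n)$ valid pairs $(a, b)$, one obtains at most $n^{1 + o(1)}$ generic tuples. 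Combined with $p^{12} \leq n^{-4/3} \omega(n)^{-12}$, the generic contribution to $\EE[N]$ is at most $n^{-1/3 + o(1)} \omega(n)^{-12} = o(1)$.

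For the \emph{degenerate} tuples I would partition according to a coincidence pattern: an equivalence relation on the nine symbols $\{d, e, f, x, y, z, u, v, w\}$ together with a choice of which of $f, z, w$ equal $1$. Each pattern $\pi$ imposes multiplicative relations on the remaining free coordinates, which simultaneously restrict the tuple count and determine $k(M)$. As an indicative worst case, take $d = e = x = y = u = v = g$: then $a = g^2 z$, $b = g^2 w$, $c = g^4 zw$, and the distinctness conditions (in particular $a, b \neq f$) force $z, w \neq 1$. Counting triples $(g, z, w)$ with $g, z, w > n^{1/9}$ and $g^4 zw \leq n$ yields $O(n^{2/3} \log n)$ tuples with $k(M) = 7$, contributing at most $n^{2/3 + o(1)} p^7 \leq n^{-1/9 + o(1)} \omega(n)^{-7} = o(1)$.

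The main obstacle will be carrying out this analysis systematically across all coincidence patterns. The key is to exploit the distinctness requirements of Definition \ref{def:forbidden_config}: since $a, b$ are distinct from the ten other coordinates, one easily verifies that $c$ is also distinct from $d, e, f, x, y, z, u, v, w$ (any such coincidence forces an impossible $e = 1$ or $b = 1$), which rules out the most catastrophic collapse patterns. For each remaining pattern the tuple count is then bounded by iterating the divisor bound $d_k(m) \leq D(m)^{k-1}$ and summing tails $\sum_{g > n^{1/9}} g^{-j} = O(n^{(1-j)/9})$ coming from forced common factors; these compensate for the lost factors of $p$ to give an $o(1)$ contribution per pattern, and summing over the finitely many patterns yields the claim.
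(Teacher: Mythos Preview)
Your high-level plan (first moment plus casework on coincidence patterns) is exactly what the paper does, but there is a genuine gap in the execution: you never invoke condition (iv) of Definition~\ref{def:forbidden_config}, namely that $A=\{x,y,z,d,e,f,u,v,w\}\setminus\{1\}$ is disjoint from $A\cdot A$ and $A\cdot A\cdot A$. Your sketch only appeals to condition (iii). Without (iv) the bound fails already for the single-coincidence pattern $x=d$, $z=w=f=1$, with $a,b,c,d,e,y,u,v$ pairwise distinct. Here $k(M)=8$, and the number of tuples is of order $n$: choose $x,y,u\in(n^{1/9},2n^{1/9}]$ and $v\in(n^{1/9},n/(xyu)]$, then $a=xy$, $b=uv$, $c=xyuv$, $e=yuv$ are determined. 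No variable appears to a power $\geq 2$, so your ``tail sums from forced common factors'' give nothing, and the contribution is $n^{1+o(1)}p^{8}\geq n^{1/9-o(1)}\omega(n)^{-8}$, which is \emph{not} $o(1)$ since $\omega(n)=D(n)^{100}=n^{o(1)}$. What actually kills this pattern is condition (iv): from $xe=c=xyuv$ one gets $e=yuv\in A\cdot A\cdot A$, a contradiction. The paper uses (iv) in exactly this way in Cases~2.2--2.4, 3.6 and 3.7, and remarks explicitly that the extra clause $k\notin R\cdot R\cdot R$ in Algorithm~\ref{alg1} (which is what produces (iv)) was inserted precisely to eliminate this pattern; without it the method only reaches $p\leq n^{-1/8-o(1)}$.

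Two smaller points. First, your coincidence partition runs only over the nine symbols $d,e,f,x,y,z,u,v,w$, but $a=b$ is allowed by Definition~\ref{def:forbidden_config} and needs separate treatment; the paper handles this in its Group~1 by using that $c=a^2$ is a perfect square (and, when the effective size drops further, a higher perfect power). Second, the paper organises the casework differently---by how many of $d,e,f$ lie in $\{x,y,z,u,v,w\}$ and then by finer parameters $m_d,m_f$---and packages the recurring ``repetitions versus effective size'' trade-off into a single lemma (Corollary~\ref{macro}) that dispatches most cases uniformly once (iv) has been used to exclude the dangerous patterns.
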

\begin{proof}[Proof of Proposition \ref{lem:NoForbidden_v1} assuming Proposition
    \ref{lem:NoForbidden}]
    If there is a forbidden configuration in $S$ that is not a forbidden
    configuration in $S \cap (n^{1/9}, n]$, $S$ must contain a term that is
    smaller than $n^{1/9}$. However, by the union bound we have that
    \[
    \Pr\left(S \cap [n^{1/9}] \neq \varnothing\right) \leq \sum_{s = 0}^{n^{1/9}} 
	\Pr(s \in S) \leq pn^{1/9} \leq \omega(n)^{-1} = o(1).
    \]
    Therefore,
    \begin{multline*}
	\Pr(\text{there is a forbidden configuration in } S) \leq 
	\Pr\left(S \cap [n^{1/9}] \neq \varnothing\right) +\\ \Pr\left(\text{there is a forbidden
	configuration in } S \cap (n^{1/9}, n]\right),
    \end{multline*}
    and Proposition \ref{lem:NoForbidden_v1} follows from Proposition
    \ref{lem:NoForbidden}.
\end{proof}

In order to prove Proposition \ref{lem:NoForbidden} we will make repeated use of
the union bound to control the appearance probability of different
configurations. Concretely, given a family $\cA \subset 2^{[n]}$ of uniform
cardinality $r = |A|$ for every $A \in \cA$, and given a random set $S \sim
[n]_p$, we will use that
\begin{equation}
\label{eq:union_bound}
\Pr\left(\exists A \in \cA \text{ with } A \subset S\right) \leq \sum_{A \in \cA} \Pr(A
\subset S) \leq |\cA| p^{r}.
\end{equation}

Before going into the full details of the proof, let us sketch why the
proposition is true. Suppose that all possible forbidden configurations in $S$
satisfy $f=z=w=1$ and that the nine numbers $a, b, c, d, e, x, y, u, v \in S$
are all distinct, which is in a sense the generic case. Since all of these nine
numbers are divisors of $c$, the total number of configurations of this form is
at most $nD(n)^{9}$. Hence, union
bounding as in \eqref{eq:union_bound} gives that the probability of having such a
configuration in $S$ is at most $nD(n)^9p^{9} = w(n)^{-9}D(n)^9 = o(1)$.

However, this is far from a proper proof. To begin with, the elements $f, z, w$
may not be equal to one, but more importantly, the different terms in a
forbidden configuration may not be distinct, giving worse bounds on the
appearance of a fixed configuration. Therefore, in order to recover the union
bound argument, it is necessary to prove that there are far fewer forbidden
configurations of this kind. Hence, our strategy will be to split the possible
forbidden configurations according to which variables are equal, and bound the total
number of configurations of each kind, to be able to apply the union bound.

It is precisely because of this reason that we require $k \not
\in R\cdot R\cdot R$ in the first condition of Algorithm \ref{alg1}. If we
define the more natural algorithm allowing $k \in R \cdot R \cdot R$, this leads
to the worse threshold of $p \leq n^{-1/8}\omega(n)^{-1}$ for the algorithm to
terminate successfully a.a.s.\ (in particular, there are too many
forbidden configurations of the form $x=d$, $z=w=f=1$, and all other variables
distinct, for union bound arguments to work).

From the previous discussion, we see that, when union bounding over a
certain type of forbidden configuration, the following parameter will be crucial.
\begin{definition}
The \emph{effective size} of a forbidden configuration $M$ is the number of
distinct elements different than $1$ in $M$.
\end{definition}

We now begin in earnest the proof of Proposition \ref{lem:NoForbidden}, which
will need a fair amount of casework. From now on, let $M = (a, b, c, d, e, f, x,
y, z, u, v, w)$ be a possible forbidden configuration satisfying the conditions
in Definition \ref{def:forbidden_config}. In particular, we know that
\begin{equation}
    \label{eq:relation_forbidden}
    xyzuvw=def,
\end{equation}
a key relation from which we will extract information in most cases we consider.
In the setting of Proposition \ref{lem:NoForbidden}, we only consider forbidden
configurations where $x, y, u, v, d, e \geq n^{1/9}$, $a, b, c \geq n^{2/9}$ and
$f, z, w \geq n^{1/9}$ if they are not $1$.
Before going into the actual cases, we also isolate a  simple lemma that is useful for
counting the number of certain configurations.

\begin{lemma}\label{ei}
    Let $e_1,\dots,e_k \in \mathbb{N}$ with $1 \le e_1\le e_2\le \dots\le e_{k}$ 
    and $1\leq t\leq n$. Let $e\coloneq \sum_{i=1}^ke_i$. Then the number of $k$-tuples
    $(a_1,\dots,a_k)$ such that $a_i\in[n]$ for all $i\in[k]$, $ a_i\ge t$ for all $i \in [k]$ with $e_i\ge 2$, and
    $\prod_{i=1}^{k} a_i^{e_i} \leq n$ is at most \[  nt^{k-e}D(n)^k\]

\end{lemma}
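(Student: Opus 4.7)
The plan is to derive the bound by first controlling the unweighted product $Q := \prod_{i=1}^k a_i$ and then, for each fixed value of $Q$, counting ordered factorisations. The key step is a single algebraic manipulation that converts the weighted constraint $\prod a_i^{e_i}\le n$ into an unweighted bound on $Q$, exploiting crucially that $a_i\ge t$ for every index $i$ with $e_i\ge 2$.

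First I would write
\[\prod_{i=1}^k a_i^{e_i} \;=\; Q\cdot\prod_{i=1}^k a_i^{e_i-1}.\]
For every index $i$ with $e_i\ge 2$, the hypothesis on the tuple forces $a_i\ge t$, so $a_i^{e_i-1}\ge t^{e_i-1}$. For indices with $e_i=1$ the inequality $a_i^{e_i-1}\ge t^{e_i-1}$ holds trivially since both sides equal $1$. Multiplying over $i$ gives $\prod_i a_i^{e_i-1}\ge t^{\sum_i(e_i-1)}=t^{e-k}$, and combining with $\prod a_i^{e_i}\le n$ yields
\[Q \;\le\; n\,t^{k-e}.\]
Note that since $e\ge k$ we have $t^{k-e}\le 1$, so in particular $Q\le n$.

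Next, for each fixed value of $Q$, I would bound the number of ordered $k$-tuples $(a_1,\dots,a_k)$ with $a_1\cdots a_k = Q$. A routine induction on $k$ (at each step, $a_1$ must be a divisor of $Q$, and $(a_2,\dots,a_k)$ is an ordered factorisation of $Q/a_1$) shows that this count is at most $d(Q)^{k-1}\le D(n)^{k-1}$. Summing over the at most $n\,t^{k-e}$ admissible values of $Q$ produces the total bound
\[ n\, t^{k-e}\cdot D(n)^{k-1} \;\le\; n\,t^{k-e}\,D(n)^{k}, \]
as required.

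I do not anticipate a genuine obstacle. The only slightly non-obvious step is the factorisation $\prod a_i^{e_i}=Q\cdot\prod a_i^{e_i-1}$, which is precisely tailored to exchange one copy of each $a_i$ for a factor of $t$, thereby converting a weighted product bound into an unweighted one; the remaining ingredients are the divisor bound and the standard estimate $d_k(Q)\le d(Q)^{k-1}$.
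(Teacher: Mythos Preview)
Your proof is correct and follows essentially the same route as the paper: bound the unweighted product $Q=\prod_i a_i$ by $n t^{k-e}$ using $a_i\ge t$ whenever $e_i\ge 2$, then count tuples with a given product via the divisor bound. The only cosmetic difference is that you use the ordered-factorisation estimate $d(Q)^{k-1}$ while the paper picks each $a_i$ as a divisor of $Q$ separately to get $D(n)^k$; your version is in fact marginally sharper but both land comfortably within the stated bound.
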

\begin{proof}
As $\prod_{i=1}^{k} a_i^{e_i} \le n$, and $a_i\geq t$ for each $i\in [k]$ with $e_i\geq 2$, any valid choice of $(a_1,\ldots,a_k)$ will have $\ell:=\prod_{i=1}^ka_i\leq nt^{k-e}$. After choosing $\ell$ (at most $nt^{k-e}$ choices) the $a_i$ are chosen as divisors of $\ell$ (at most $D(n)$ choices for each $i\in [k]$). 
\end{proof}
% \begin{remark}
%     When applying the previous lemma, most of the time we will only use the
%     weaker bound 
%     \begin{equation}
% 	\label{eq:bound_counting}
% 	N\le Cnt^{k-e}D(n)^k
%     \end{equation}
%     which applies when for $t^e \leq n$.
%     This is obtained by naively bounding $e_1\ge 1$.
% \end{remark}
In particular, we will apply it several times in the following form, which, in
informal terms, tells us that what we lose by having repetitions in
$x,y,z,u,v,w$ is compensated by better bounds on the number of such
configurations.
\begin{corollary}\label{macro}
    Given a forbidden configuration with effective size $s$, define
    $r = r_1 - r_2$, where $r_1$ is the number of terms among $x,y,z,u,v,w$
    which are not $1$, and $r_2 = \vert \{x,y,z,u,v,w\}\backslash\{1\}\vert$, so
    that $r$ counts the number of repetitions among terms that are not one.
    Forbidden configurations such that $s+r\ge 9$ appear asymptotically almost
    never in $[n]_p \cap (n^{1/9}, n]$.
\end{corollary}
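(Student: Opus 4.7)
The plan is to bound, via a first-moment calculation using \eqref{eq:union_bound}, the expected number of forbidden configurations in $[n]_p \cap (n^{1/9}, n]$ with $s + r \geq 9$ and conclude the nonappearance a.a.s.\ by Markov. I would partition the set of such configurations into finitely many \emph{types}, each specified by the parameters $r_1, r_2$, the multiplicity pattern of $(x,y,z,u,v,w)$, and which among $f, z, w$ equal $1$. Since the number of types is bounded, it suffices to show that each individual contribution is $o(1)$.

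Next, for a fixed type, I would count the number of forbidden configurations realising it. The key structural identity is $xyzuvw = def = c \leq n$. Write the distinct non-one values among $\{x,y,z,u,v,w\}$ as $a_1 \leq \cdots \leq a_{r_2}$ with multiplicities $e_1,\dots,e_{r_2}$ summing to $r_1$. Each $a_i \geq n^{1/9}$, so applying Lemma \ref{ei} with $t = n^{1/9}$, $k = r_2$, $e = r_1$ bounds the number of valid multisets by $n \cdot (n^{1/9})^{r_2 - r_1} D(n)^{r_2} = n^{1 - r/9} D(n)^{r_2}$. A bounded number of assignments distributes the values across the six labelled positions. Once $(x,y,z,u,v,w)$ is fixed, $a, b$ and $c$ are determined, and the triples $(d,e,f)$ with $def = c$ number at most $D(n)^2$, by choosing $d \mid c$ followed by $e \mid c/d$. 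Thus the number of forbidden configurations of the given type in $[n]$ is $O\bigl(n^{1 - r/9} D(n)^{r_2 + 2}\bigr)$.

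A specific configuration is contained in $S \sim [n]_p$ only when its $s$ distinct non-one elements all lie in $S$, which has probability $p^s$. Substituting the hypothesis $p \leq n^{-1/9} D(n)^{-100}$ gives that the expected number of configurations of the type is at most
\[
    O\bigl(n^{1 - (r+s)/9}\, D(n)^{r_2 + 2 - 100 s}\bigr).
\]
For $s + r \geq 9$ the exponent of $n$ is non-positive. Moreover, since $a, b \geq n^{2/9}$ and $c = ab$ is strictly larger than both, the set $\{a, b, c\}$ contributes at least two distinct non-one elements, so $s \geq 2$; together with $r_2 \leq 6$ this yields $100 s - r_2 - 2 \geq 192$. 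Using the elementary fact that $D(n) \to \infty$ (for instance, $d(k!) \geq k$), the displayed expectation is $o(1)$, and summing over the $O(1)$ types closes the argument.

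The main obstacle I anticipate is the tight case $s + r = 9$: there the exponent of $n$ is exactly $0$ and all decay must come from the divisor-bound factor $D(n)^{r_2 + 2 - 100 s}$. This is precisely why the definition $\omega(n) = D(n)^{100}$ carries such a large exponent—to provide enough slack to absorb the $D(n)^{r_2+2}$ arising simultaneously from Lemma \ref{ei} and from bounding the $(d,e,f)$ divisor count, for every possible type. The intermediate casework in a full write-up will therefore reduce to verifying that, type by type, the parameters $(r, r_2, s)$ satisfy the inequality $100 s - r_2 - 2 > 0$, which is immediate from $s \geq 2$ and $r_2 \leq 6$.
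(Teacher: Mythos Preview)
Your proposal is correct and follows essentially the same approach as the paper: fix the type $(r_1,r_2,s)$, apply Lemma~\ref{ei} with $t=n^{1/9}$ to count choices of $(x,y,z,u,v,w)$, and union bound. Your version is in fact slightly more careful than the paper's own write-up, which silently omits the extra $D(n)^2$ factor for choosing $(d,e,f)$ once $c$ is fixed; as you correctly note, this is harmless given the generous exponent in $\omega(n)=D(n)^{100}$.
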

\begin{proof}
    Consider fixed values of $s, r, r_1, r_2$ such that $s+r \geq 9$. Applying
   Lemma \ref{ei} to the condition $xyzuvw \le n$ with $t=n^{1/9}$
    implies there are at most $nn^{(r_2-r_1)/9}D(n)^{r_2} = n ^{1-r/9}
    D(n)^{r_2}$ forbidden configurations with this value of $s, r, r_1, r_2$. Union bounding as in \eqref{eq:union_bound} tells us that these
    configurations appear with probability at most
    $n^{1-r/9} D(n)^{r_2}p^s = o(1)$.
    Again union bounding over all constantly many possible values of $s, r, r_1$
    and $r_2$ yields the result.
\end{proof}

We now split forbidden configurations into different cases, where we use
somewhat different approaches to count the number of forbidden configurations.
Given a forbidden configuration, let $s$ be its effective size, let $m$ count
how many numbers among $d, e, f$ belong to $\left\{ x,y,z,u,v,w \right\}$, and
let $m_d$ and $m_f$ denote the number of terms among $x,y,z,u,v,w$ equal to $d$
and $f$ respectively. In Table \ref{table:cases} we see a description of all
possible cases we consider. 
% \begin{table}[h]
% \begin{tabular}{cc} \toprule
%     Group & Conditions \\ \midrule
%     1 & $a= b$ \\
%     2 & $a \neq b$, and $d, e, f$ distinct \\
%     3 & $a \neq b$, $d = e, f$ not distinct, and $f \neq 1$ \\
%     3 & $a \neq b$, and $d, e, f$ not distinct \\ \bottomrule
% \end{tabular}
% \end{table}

\begin{table}[h]
\centering
\begin{tabular}{l l l l}
\toprule
\multicolumn{3}{c}{Conditions} & Case \\
\midrule

% --------- UPPER MAIN BRANCH (A) ---------
\multirow{3}{*}{$a=b$}
& & $s \geq 5$   & \ref{case:1.1} \\
& & $s = 4$ &   \ref{case:1.2} \\
& & $s = 3$ &   \ref{case:1.3} \\

\midrule

% --------- LOWER MAIN BRANCH (B) ---------
% B1 splits into 4

\multirow{11}{*}{$a\neq b$}
& \multirow{4}{*}{$d, e, f$ distinct} & $m = 0$ & \ref{case:2.1} \\
&                     & $m = 1$& \ref{case:2.2} \\
&                     & $m = 2$ & \ref{case:2.3} \\
&                     & $m = 3$ & \ref{case:2.4} \\

\cmidrule(lr){2-4}

% B2 is a leaf
& $d = e= f$ &  & \ref{case:3.1} \\

% B3 is a leaf
& $d=e$, $f=1$ &  & \ref{case:3.2} \\

\cmidrule(lr){2-4}

% B4 splits into 6
& \multirow{5}{*}{$d=e\neq f, f \neq 1$} & $m_d=0, m_f=0$ & \ref{case:3.3} \\
&                     & $m_d=0, m_f\geq1$ & \ref{case:3.4} \\
&                     & $m_d=1, m_f= 0$ & \ref{case:3.5} \\
&                     & $m_d=1, m_f \geq 1$ & \ref{case:3.6} \\
&                     & $m_d \geq 2$ & \ref{case:3.7} \\
\bottomrule
\end{tabular}
    \caption{Forbidden configuration cases}
    \label{table:cases}
\end{table}
The only possibilities missing from the table, which are when $a\neq b$ and $d=f
\neq e$ or $e=f \neq d$, are symmetric to the case where $a\neq b$ and $d=e\neq
f$, with $f\neq 1$, and can be proved by relabeling the terms.
As the naming suggests, we have divided these cases into three different groups.
As we have already said, for every case we prove that configurations of
that kind appear asymptotically almost never. Union bounding over all $14$ cases
proves Proposition \ref{lem:NoForbidden}.

\subsubsection{Forbidden configurations of Group 1}
All cases in this group satisfy that $a=b$. If $a=b$, then $c=ab=a^2$ must be a
perfect square. Hence, there are only $n^{1/2}$ possible choices for $c$, and
all other terms of the configuration divide $c$, so the total number of
forbidden configurations of this kind is at most $n^{1/2}D(n)^{11}$. We split
the configurations according to their effective size $s$, which must be at least
$3$ because $a, c, d$ are distinct.

\begin{case}{1.1}[$s \geq 5$]
    \label{case:1.1}
    If the effective size of the configuration is at least $5$, a union bound as in
    \eqref{eq:union_bound} bounds the appearance probability for these
    configurations by $n^{1/2}D(n)^{11}p^5 = o(1)$.
\end{case}
\begin{case}{1.2}[$s=4$]
    \label{case:1.2}
If the effective size is four, since $d,e,f,x,y,z,u,v,w$ are
all distinct from $a$ and $c$, they can only take values $1$, $g$ or $h$ for
some variables $ g,h \geq n^{1/9}$. Plugging this into \eqref{eq:relation_forbidden}
gives a relation of the form $g^\ell h^r = g^{i}h^j$ for some $i,j,\ell,r$ with
$\ell+r \in \{4,5,6\}$, $i+j \in \{2, 3\}$. Reordering, we have that $g^{\ell-i}
= h^{j-r}$. As $i+j < l+r$, one of the two sides of the equation is not equal to
one, so in fact both sides cannot be equal to one. This implies that $g$ and $h$
are powers of a common factor, and then $a$ itself must be a perfect power. Once
we have that $a$ is a perfect power, then $c=a^2$ must be a perfect power of
exponent at least 4. Hence, we have $O(n^{1/4})$ possible choices for $c$ (accounting for powers of exponent $4$ and greater) and configurations
of this family appear at most $O(n^{1/4}D(n)^{11})$ times in $[n]$. A union
bound gives that the appearance probability is at most $O(n^{1/4}D(n)^{11}p^4) =
o(1)$.
\end{case}

\begin{case}{1.3}[$s=3$]
    \label{case:1.3}
Finally, if the effective size of one such configuration is three, then $a$ and
$b$ must be squares or cubes themselves, implying that configurations of this
family appear at most $O(n^{1/4}D(n)^{11})$ in $[n]$.
Now a union bound is enough to finish. 
\end{case}

\subsubsection{Forbidden configurations of Group 2}
In this group we study forbidden configurations satisfying
\[
    a \neq b \qquad d, e, f \text{ distinct.}
\]
Recall that $m$ counts how many numbers among
$d,e,f$ belong to $\{x,y,z,u,v,w\}\backslash\{1\}$, and let $r$ be as in
Corollary \ref{macro}. Since $d, e, f$ are distinct and different to $a,b$ (on account of their colours), we have that $s+r+m$ is
precisely the number of non-one terms in the configuration.
We split the configurations according to the value of $m$.

\begin{case}{2.1}[$m=0$]
    \label{case:2.1}
Since the number of non-one terms in the configuration is at least $9$ and
$m=0$, we have that $s+r = s+r+m \geq 9$, and we may apply Corollary \ref{macro}.
\end{case}

\begin{case}{2.2}[$m=1$]
    \label{case:2.2}
Suppose that $f = z = w = 1$. In that case,
\eqref{eq:relation_forbidden} reduces to $de=xyuv$, and the fact that $m=1$ tells
us that we can simplify a factor on each side of the equality, contradicting
condition (iv) in Definition \ref{def:forbidden_config}. 

It follows that if $m=1$ then $f,z,w$ cannot be all equal to one. Therefore, the
number of non-one terms in the forbidden configuration is at least $10$, so $s+r
+1 = s+r + m \geq 10$,  so $s+r \geq 9$ and again we conclude by applying
Corollary \ref{macro}.
\end{case}

\begin{case}{2.3}[$m=2$]
    \label{case:2.3}
Suppose that $f = 1$. Then
\eqref{eq:relation_forbidden} becomes $de=xyzuvw$, and from $m=2$ we may
deduce that the product of four numbers among $u,v,w,x,z,w$ is $1$, a
contradiction. Hence assume $f\ne 1$. The fact that $m=2$ tells us that we can
simplify two terms on each side of the equality $def=xyzuvw$. The resulting
equality will contradict condition (iv) in Definition \ref{def:forbidden_config}
unless $z$ and $w$ are both not equal to one. In that case, the total number of
non-one terms in the configuration is 12, so $s+r \geq 12-m \geq 9$ and we may
apply Corollary \ref{macro}.
\end{case}

\begin{case}{2.4}[$m=3$]
    \label{case:2.4}
This case is actually void, as $def = xyzuvw$ cannot hold if
$d,e,f\in \{x,y,z,u,v,w\}$, because it would imply that a product of three numbers among $x,y,z,u,v,w$ is equal to one.\\
\subsubsection{Forbidden configurations of Group 3}
Finally, it only remains to prove that forbidden configurations where
\[
    a \neq b \qquad d,e,f \text{ not distinct}
\]
occur asymptotically almost never. We begin by proving the first two cases.
\end{case}

\begin{case}{3.1}[$d=e=f$]
    \label{case:3.1}
 In this case, $c=def=d^3$ is a cube. Hence, there are at most
 $n^{1/3}D(n)^{11}$ possible configurations of this form. We also know they have
 effective size at least four, because $a, b, c, d$ are all distinct. Therefore,
 they appear with probability at most $n^{1/3}D(n)^{11}p^4 = o(1)$.
 \end{case}

 \begin{case}{3.2}[$d=e, f=1$]
    \label{case:3.2}
 In this case, $c=def=d^2$ is a perfect square.
 Hence, there are at most $n^{1/2}D(n)^{11}$ configurations of this form in
 $[n]$. By a union bound, the appearance probability of such a configuration of
 effective size at least $5$ is at most $n^{1/2}D(n)^{11}p^5 = o(1)$. As
 $a,b,c,d$ are all different, effective size $4$ is the only remaining
 possibility. However, in such a case, $a$ and $b$ must both be equal to either
 $d^2$ or $d^3$, contradicting $ab = c = d^2$. Therefore, there are no
 configurations of this form with effective size $4$.
 \end{case}

In the remaining cases, we assume without loss of generality that $d=e\neq f$
and $f\neq 1$. Recall that $m_d$ and $m_f$ respectively denote the number of
values among $x,y,z,u,v,w$ equal to $d$ and $f$. We split all these forbidden
configurations into 5 groups  based on the values of $m_d$ and $m_f$. As usual,
we prove that configurations in each case occur asymptotically
almost never.

 \begin{case}{3.3}[$m_d=0, m_f=0$]
    \label{case:3.3}
This case follows from Corollary \ref{macro}. Indeed, adopting the notation there we have that $s=5+r_2=5+r_1-r$ and so $s+r\geq 5+r_1\geq 9$. 
\end{case}

\begin{case}{3.4}[$m_d=0, m_f \geq 1$]
    \label{case:3.4}
    After possibly relabeling terms, we may assume that $x=f$. In that case,
    \eqref{eq:relation_forbidden} becomes
    \[
	fyzuvw = d^2f,
    \]
    which implies that $yzuvw = d^2$. We split our analysis according to the size of $d$.
    \begin{itemize}
	\item If $d \geq n^{3/9}$, from $c=d^2f$ and Lemma \ref{ei}
	    we see that there are at most $n^{6/9}D(n)^2$ possible choices for
	    $c$, and at most $n^{6/9}D(n)^{11}$ possible configurations of this
	    kind. If the effective size is at least six, they appear with
	    probability at most $n^{6/9}D(n)^{11}p^6 = o(1)$. Since $a, b, c,
	    d, f$ are distinct, the only remaining possibility is effective size
	    $5$, and in that case $x, y, z, u, v, w$ are all equal to $1$ or
	    $f$, so $c=xyzuvw$ is a perfect power of exponent at least $4$.
	    Hence, there are at most $O(n^{1/4})$ choices for $c$, and union
	    bounding again concludes.
	\item If $d < n^{3/9}$, we study the probability that the
	    subconfiguration formed by $(y, z, u, v, w, d, b)$ appears. By
	    assumption, we have at most $n^{3/9}$ possible choices for $d$. Since $y, z,
	    u, v, w$ all divide $d^2$, once we fix $d$ there are at most $D(n)^{5}$ choices for
	    these terms, and then $b=uvw$ is uniquely determined. Hence, there
	    are at most $n^{3/9}D(n)^{5}$ possible subconfigurations in
	    $(n^{1/9}, n]$, and every subconfiguration has effective size at
	    least three, because $b, d, u$ are distinct, so the probability that such a subconfiguration appears is bounded by $n^{3/9}D(n)^5p^3 = o(1)$. This also bounds the probability of the whole configuration appearing.
    \end{itemize}
\end{case}

 \begin{case}{3.5}[$m_d=1, m_f=0$]
    \label{case:3.5}
Assume without loss of generality that $u = d$. Note that
\eqref{eq:relation_forbidden} gives
\[
    xyzvw = df = \frac{d^2f}{d} \le n^{8/9},
\]
since $d \geq n^{1/9}$.
Fixing some choice $r'_1$ to be the number of terms among $x,y,z,v,w$ that are not equal to 1 and a choice of $r_2'=|\{x,y,z,v,w\}\setminus \{1\}|$, we have from Lemma \ref{ei} (applied with $n$ replaced by $n^{8/9}$) that there are at most $n^{8/9}n^{(r'_2-r'_1)/9}D(n)^{r_2'}$ choices for $x,y,z,v$ and $w$. Furthermore, note that once $x,y,z,v,w$ are given, there are at most $D(n)^{2}$
ways to complete the configuration, since there will be at most $D(n)^2$ choices
for $d$ and $f$, and $d,f$ determine $c$. The effective size of such a configuration is precisely $s=5+r_2'$ with the 5 accounting for $a,b,c,d$ and $f$. Therefore the probability that a configuration with values $r_1'$ and $r_2'$ appears is at most $D(n)^{r_2'+2}n^{(8+r_2'-r_1')/9}p^{s}=o(n^{3-r_1'}/9)=o(1)$ as $r_1'\geq 3$. Union bounding over all choices of $r_1'$ and $r_2'$ thus a.a.s.\ rules out all configurations in this group.
\end{case}

\begin{case}{3.6}[$m_d=1, m_f\geq 1$] In this case,
    \label{case:3.6}
    \eqref{eq:relation_forbidden} gives
    $d^2f = xyzuvw$, and we know that a $d$ and an $f$ in the left hand side will
cancel out. Thus $d$ must be the product of two, three or four numbers different
from $1$ among $x,y,z,u,v,w$. The first two cases are impossible by condition
(iv) in Definition \ref{def:forbidden_config}, so $d$ is the product of four
such numbers. Therefore $d \ge n^{4/9}$ and $f \leq n/d^2 \leq n^{1/9}$, which cannot occur.
\end{case}

\begin{case}{3.7}[$m_d\geq 2$] In this case,
    \label{case:3.7}
    \eqref{eq:relation_forbidden} gives again $d^2f = xyzuvw$.
     Since $m_d \ge 2$, we deduce that $f$ must be the product of two, three or
     four numbers among $x,y,z,u,v,w$, with the other two being equal to $d$.
     Thus by condition (iv) in Definition \ref{def:forbidden_config}, $f$ must
     be the product of four such numbers, implying that $z,w\ne 1$ and $m_f=0$.
     Now  using that $z,w\ne 1$, we have that the $r$ in Corollary \ref{macro} counts all of the repetitions among $x,y,z,u,v,w$. 
     We then have that the effective size of the configuration is $s=10-r$ as $a,b,c$ and $f$  are all distinct and none of the terms $x,y,z,u,v,w$ coincide with $a,b,c$ or $f$ (those that are not equal to $d$ divide $f$ and so cannot equal $f$). The conclusion thus follows from Corollary \ref{macro}. 
     \qed  
\end{case}

\section{The (1)-statement for two colours} \label{sec:1-statement}
In this section, we prove the (1)-statement of Theorem
\ref{thm: 2 col}. To do so, we find a certain small pattern that is
product-Schur, and then show that a copy of this pattern appears in $[n]_p$ with
high probability when $p$ is large enough. We begin by proving that the desired
pattern is product-Schur.
\begin{lemma}
    Given $a, b, c, d \in [n]$, the set
    \begin{equation}
\label{eq:schur_pattern}
	S_{a,b,c,d} = \{a, b, ab, c, ac, abc, a^2bc, a^2b^2c, d, ad, bd, abd, a^2d, a^2bd\}
    \end{equation}
    is product-Schur.
\end{lemma}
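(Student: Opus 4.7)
My plan is an exhaustive case analysis on any two-colouring of $S_{a,b,c,d}$, exploiting the many multiplicative triples $xy=z$ contained in the set in order to force a monochromatic triple. The relevant triples fall into two groups. Inside the first eight elements we have $(a,b,ab)$, $(a,c,ac)$, $(ab,c,abc)$, $(b,ac,abc)$, $(a,abc,a^2bc)$, $(ab,ac,a^2bc)$, $(b,a^2bc,a^2b^2c)$, $(ab,abc,a^2b^2c)$. The six elements involving $d$ contribute $(a,d,ad)$, $(b,d,bd)$, $(ab,d,abd)$, $(a,bd,abd)$, $(b,ad,abd)$, $(a,ad,a^2d)$, $(a,abd,a^2bd)$, $(b,a^2d,a^2bd)$ and $(ab,ad,a^2bd)$. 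By colour symmetry I may assume $a$ is red, and I aim to produce a monochromatic triple in every case.

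The first step is to rule out $a$ and $b$ having the same colour. If both are red then $(a,b,ab)$ forces $ab$ blue, and one splits on $c$. If $c$ is red then $(a,c,ac)$, $(ab,ac,a^2bc)$, $(a,abc,a^2bc)$, $(b,a^2bc,a^2b^2c)$ successively pin down $ac$, $a^2bc$, $abc$, $a^2b^2c$ and make $(ab,abc,a^2b^2c)$ monochromatic blue; if $c$ is blue then $(ab,c,abc)$, $(b,ac,abc)$, $(ab,ac,a^2bc)$ force $(a,abc,a^2bc)$ monochromatic red. So I may assume $b$ is blue. A wholly analogous propagation shows that if $c$ is red then $(a,c,ac)$, $(b,ac,abc)$, $(ab,c,abc)$, $(a,abc,a^2bc)$ force $(ab,ac,a^2bc)$ monochromatic blue, so I may assume $c$ is also blue.

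It then remains to handle the configuration $a$ red, $b,c$ blue, which I split on the colour of $ac$. When $ac$ is blue, $(b,ac,abc)$ forces $abc$ red; a short sub-split on $ab$ closes the case: if $ab$ is red then $(ab,ac,a^2bc)$ forces $a^2bc$ red and $(a,abc,a^2bc)$ is monochromatic red, while if $ab$ is blue then $(b,a^2bc,a^2b^2c)$ and $(ab,abc,a^2b^2c)$ combine with the previous forcings to produce a monochromatic red triple $(ab,abc,a^2b^2c)$. The only remaining case, $ac$ red, is precisely the one where the first eight elements admit monochromatic-free colourings, so the $d$-elements become essential. Here I would split on the colour of $d$. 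If $d$ is blue, then $(b,d,bd)$ forces $bd$ red, and then either $(a,bd,abd)$ is monochromatic red or $abd$ is blue, making $(ab,d,abd)$ monochromatic blue. If $d$ is red then $(a,d,ad)$ forces $ad$ blue; propagating through $(b,ad,abd)$, $(a,bd,abd)$, $(a,ad,a^2d)$, $(b,a^2d,a^2bd)$, $(a,abd,a^2bd)$, $(ab,ad,a^2bd)$ while sub-splitting once more on the colour of $ab$ yields a monochromatic triple $(b,ad,abd)$ or $(ab,ad,a^2bd)$ in each case.

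The main obstacle is not any individual deduction but the bookkeeping: the design of $S_{a,b,c,d}$ is such that monochromatic-free colourings of the first eight elements exist only for a very specific configuration with $a,b$ of opposite colours, $c$ the colour of $b$, and $ac$ the colour of $a$, and the $d$-layer is tailored precisely to kill this configuration by playing the role of a second "fresh" generator analogous to $c$. Verifying that this final case really does force a monochromatic triple for both colours of $d$ and both colours of $ab$ is the only non-routine part; the rest of the argument is straightforward chase through the listed triples.
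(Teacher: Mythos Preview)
Your approach is the same as the paper's --- an exhaustive case analysis chasing implications through the listed triples --- but your case decomposition is more tangled than necessary, and several of the stated forcings are wrong.

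The paper's organisation is cleaner: once $a$ and $b$ receive opposite colours, the $d$-subpattern $\{a,b,ab,d,ad,bd,abd,a^2d,a^2bd\}$ alone forces a monochromatic triple, with no reference to $c$ at all. Your detour through the colour of $c$ and then of $ac$ is harmless but buys nothing.

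More importantly, three of your propagation chains are incorrect as written:
\begin{itemize}
    \item In the sub-case $ac$ blue, $ab$ red: the triple $(ab,ac,a^2bc)$ does \emph{not} force $a^2bc$ red, since $ac$ is blue; in fact $(a,abc,a^2bc)$ forces $a^2bc$ \emph{blue}, then $(b,a^2bc,a^2b^2c)$ gives $a^2b^2c$ red, and $(ab,abc,a^2b^2c)$ is the monochromatic red triple. Your claimed monochromatic triple $(a,abc,a^2bc)$ is not monochromatic.
    \item In the sub-case $ac$ blue, $ab$ blue: the triple $(ab,abc,a^2b^2c)$ cannot be monochromatic red, since $ab$ is blue. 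The correct closure is $(ab,ac,a^2bc)$ forcing $a^2bc$ red, whence $(a,abc,a^2bc)$ is monochromatic red.
    \item In the sub-case $ac$ red, $d$ blue: after deducing $abd$ blue, you assert $(ab,d,abd)$ is monochromatic blue, but the colour of $ab$ has not been determined at this point. When $ab$ is red the argument requires further steps: $(b,ad,abd)$ forces $ad$ red, $(a,ad,a^2d)$ forces $a^2d$ blue, $(b,a^2d,a^2bd)$ forces $a^2bd$ red, and finally $(ab,ad,a^2bd)$ is monochromatic red.
\end{itemize}
Each gap is locally fixable and the overall conclusion is correct, but as stated the proof does not go through.
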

\begin{proof}
    Suppose we have a colouring $S_{a, b, c,d} = B \cup R$ with no monochromatic
    products. Without loss of generality, assume that $a \in B$.
    
    We claim that if $b \in B$, the subpattern
    \[\{a, b, ab, c, ac, abc, a^2bc, a^2b^2c\},\]
    must have a monochromatic product. To see this, we split depending on the
    colour of $c$:
    \begin{itemize}
	\item If $c \in B$, from the lack of monochromatic products we deduce
	    that
	    \[
		a, b, c \in B \implies ab, ac \in R \implies (ab)(ac)=a^2bc
		    \in B,
	    \]
	    and then
	    \[
	    a^2bc \in B, a \in B \implies (a^2bc)/a = abc \in R.
	    \]
	    From $(abc)(ab) = a^2b^2c$ we deduce that $a^2b^2c \in B$, but then
	    $b(a^2bc) = a^2b^2c$ is a solution with all terms belonging to $B$,
	    a contradiction.
	\item If $c \in R$, we see that
	    \[
		a, b \in B \implies ab \in R,
	    \]
	    which together with $c \in R$ gives that $(ab)c =abc 
	    \in B$. Therefore,
	    \[
		a, b, abc, \in B \implies a(abc) = a^2bc \in R, (abc)/b = ac \in
		R,
	    \]
	    a contradiction because $(ab)(ac) = a^2bc$ is a monochromatic
	    product.
    \end{itemize}

    If $b \in R$, the subpattern
    \[\{a, b, ab, d, ad, bd, abd, a^2d, a^2bd\}\]
    must have a monochromatic product by an analogous reasoning, where fixing the
    colour of $d$ forces the rest of the colouring, leading to a contradiction
    in both cases.
\end{proof}
The (1)-statement of Theorem \ref{thm: 2 col} is then a
consequence of the following proposition.
\begin{proposition}
    For $p  = \omega(n^{-1/11})$, the random set $[n]_p$ a.a.s.\ contains a subset of the form
    \eqref{eq:schur_pattern}.
\end{proposition}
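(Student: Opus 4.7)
The plan is a direct second moment argument on a carefully restricted family of $14$-element tuples. I exploit the decomposition of $S_{a,b,c,d}$ into a ``seed'' $\{a, b, ab\}$ and two ``branches'' $B_c = \{c, ac, abc, a^2bc, a^2b^2c\}$ and $B_d = \{d, ad, bd, abd, a^2d, a^2bd\}$, which both organises the counting and isolates the dominant source of correlations between copies---namely pairs of tuples sharing their seed.

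Let $X = X_L$ count tuples $(a, b, c, d)$ with $a, b \in [L, 2L]$ for a scale $L = L(n)$ to be tuned, $c \leq n/(ab)^2$, $d \leq n/(a^2b)$, all $14$ elements of $S_{a,b,c,d}$ distinct, and $S_{a,b,c,d} \subseteq [n]_p$. A direct count gives $\mathbb{E}[X_L] = \Theta(n^2 p^{14}/L^5)$, which tends to infinity provided $L \ll (n^2 p^{14})^{1/5}$. For the variance, the dominant contributions come from pairs of tuples sharing their seed: (i)~sharing only $\{a, b, ab\}$ (distinct $c,d$ and $c',d'$) contributes $\Theta(n^4 p^{25}/L^{12})$; (ii)~additionally sharing $B_c$ (distinct $d \neq d'$) contributes $\Theta(n^3 p^{20}/L^8)$; (iii)~additionally sharing $B_d$ (distinct $c \neq c'$) contributes $\Theta(n^3 p^{19}/L^9)$; pairs with distinct seeds contribute less. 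Comparing each with $\mathbb{E}[X_L]^2 = n^4 p^{28}/L^{10}$ yields the constraints $L \gg p^{-3/2}$ (from (i)) and $L \ll n^{1/2} p^4$ (the binding upper bound, from (ii)).

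For $p$ in the regime of the proposition, one picks any $L$ inside the resulting window, so that $\mathbb{E}[X_L] \to \infty$ and $\mathrm{Var}(X_L) = o(\mathbb{E}[X_L]^2)$; Chebyshev's inequality then gives $\Pr[X_L > 0] \to 1$, producing the desired copy of $S_{a,b,c,d}$ in $[n]_p$. The main obstacle is completing the variance analysis: beyond the three dominant cases above, one must enumerate pairs of tuples overlapping via less obvious multiplicative coincidences---such as $c/c'$ being a ratio of two elements of $F = \{1, a, ab, a^2 b, a^2 b^2\}$, or equalities such as $c = a^i b^j d$ for small exponents $i, j$---and verify that each contribution is $o(\mathbb{E}[X_L]^2)$. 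This case analysis parallels in spirit, though is less intricate than, the 14-case enumeration the paper carries out for the (0)-statement in Section~\ref{sec:0-statement}.
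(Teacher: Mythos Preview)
Your second-moment plan is viable and, once the window $p^{-3/2}\ll L\ll n^{1/2}p^4$ is extracted, recovers exactly the threshold $p=\omega(n^{-1/11})$ that the paper obtains (the $n^{-1/9}$ in the displayed proposition is a typo for $n^{-1/11}$). But it is a genuinely different route, and the comparison is worth making.

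The paper sidesteps the variance calculation entirely. It places $a,b,c,d$ in carefully chosen intervals near $n^{1/11}$, $n^{2/11}$, $n^{5/11}$, $n^{7/11}$ respectively, tuned so that the fourteen monomials $a,b,ab,c,ac,\ldots,a^2bd$ fall into fourteen \emph{pairwise disjoint} subintervals of $[n]$. This disjointness buys full independence: one reveals $[n]_p$ in four rounds, first finding some $a\in[n]_p$, then (given $a$) some $b$ with $b,ab\in[n]_p$, then some $c$ with its five associated monomials present, then some $d$ with its six. Each round fails with probability at most $\exp(-\Theta(p^k n^{k/11}))=o(1)$ for the relevant $k$, and a union bound over the four rounds finishes the proof in under a page with no overlap case analysis whatsoever.

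Your setup, by contrast, puts $a$ and $b$ at the \emph{same} scale $L$ and lets $c,d$ range freely, so the fourteen terms are not confined to disjoint intervals and the overlap enumeration cannot be avoided. You have correctly isolated the three seed-sharing contributions, but completing the variance bound also requires handling all cross-seed coincidences---pairs with $ab=a'b'$, pairs with $fc=f'c'$ for $f,f'\in\{1,a,ab,a^2b,a^2b^2\}$, equalities between the $c$- and $d$-branches, and so on---as well as verifying that the degenerate tuples (those with fewer than fourteen distinct elements) are a lower-order fraction of the count. You flag this yourself as the main obstacle; it is almost certainly tractable, but it is real work, and the paper's interval trick makes it unnecessary. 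If you wanted to keep a second-moment framework, adopting the paper's four disjoint scales for $a,b,c,d$ would already collapse most of that casework.
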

\begin{proof}
    For convenience, we will only consider sets $S_{a,b,c,d}$ for integers $a \in A$, $b \in B$, $c \in C$, $d\in D$, where 
\begin{align*}
A &= [(1-\delta)n^{1/11}, n^{1/11}],\\
B &= [(1-\delta)n^{2/11}/2, n^{2/11}/2],\\
C &= [(1-\delta)n^{5/11}, n^{5/11}],\\
D &= [(1-\delta)n^{7/11}/3, n^{7/11}/3].
\end{align*}
Here $\delta$ is any small enough fixed positive constant, say, $\delta =
10^{-3}$. These intervals are chosen so that the different possible terms of
$S_{a, b, c, d}$ lie in disjoint intervals. Concretely, we have that
\begin{multicols}{2}

\noindent
\begin{align*}
    a &\in A = [(1-\delta)n^{1/11}, n^{1/11}], \\
b &\in B = [(1-\delta)n^{2/11}/2, n^{2/11}/2], \\
ab &\in I_{ab} \coloneq [(1-2\delta)n^{3/11}/2, n^{3/11}/2], \\
c &\in C = [(1-\delta)n^{5/11}, n^{5/11}], \\
ac &\in I_{ac} \coloneq [(1-2\delta)n^{6/11}, n^{6/11}], \\
abc &\in I_{abc} \coloneq [(1-3\delta)n^{8/11}/2, n^{8/11}/2], \\
a^2bc &\in I_{a^2bc} \coloneq [(1-4\delta)n^{9/11}/2, n^{9/11}/2],
\end{align*}
\columnbreak
\noindent
\begin{align*}
    a^2b^2c &\in I_{a^2b^2c} \coloneq [(1-5\delta)n^{11/11}/4, n^{11/11}/4], \\
d &\in D= [(1-\delta)n^{7/11}/3, n^{7/11}/3], \\
ad &\in I_{ad} \coloneq [(1-2\delta)n^{8/11}/3, n^{8/11}/3], \\
bd &\in I_{bd} \coloneq [(1-2\delta)n^{9/11}/6, n^{9/11}/6], \\
abd &\in I_{abd} \coloneq [(1-3\delta)n^{10/11}/6, n^{10/11}/6], \\
a^2d &\in I_{a^2d} \coloneq [(1-3\delta)n^{9/11}/3, n^{9/11}/3], \\
a^2bd &\in I_{a^2bd} \coloneq [(1-4\delta)n^{11/11}/6, n^{11/11}/6].
\end{align*}
\end{multicols}
\vspace{-2\baselineskip} 
\noindent which are disjoint intervals for any large enough $n$. 

% These restrictions ensure that if two different sets $S_{a,b,c,d},
% S_{a', b', c', d'} \subset [n]$ 
% have common elements, then those elements must perform the same role in their
% respective sets, i.e.\ if $x \in S_{a,b,c,d}\cap S_{a',b',c',d'}$, then
% $x=a^{i_1}b^{i_2}c^{i_3}d^{i_4} = a'^{i_1}b'^{i_2}c'^{i_3}d'^{i_4}$ for only
% one possible choice among the exponents in \eqref{eq:schur_pattern}.

We now prove that if $p = \omega(n^{-1/11})$, we
can find at least one instance of $S_{a,b,c,d}$ in $[n]_p$. We will do this by
revealing $[n]_p$ step by step, and choosing one of $a,b,c,d$ at each step.
\begin{itemize}
    \item In the first step, we reveal $A$. By independence, we have that
	% Such an $a$ must exist with high probability since each $a$
	% lies in $[n]_p$ independently and with probability $p$, there are
	% $\Theta(n^{1/11})$ choices for $a$, and $n^{1/11}p \gg 1$. Formally, as
	% $\left\vert [n]_p\cap [(1-\delta)n^{1/11}, n^{1/11}]\right\vert$ is
	% binomially distributed with success probability $p$, 
    \[
	\Pr\left( A \cap [n]_p = \varnothing\right) =
	\prod_{i\in A} \Pr(i \not \in [n]_p) = (1-p)^{\delta n^{1/11}} 
	\le e^{-\delta pn^{1/11}} = o(1).
    \]
    If $[n]_p \cap A \neq \varnothing$, we set $a$ to be
    any element in $[n]_p \cap A$ and proceed to the second step.
\item In the second step, we reveal $B$ and $I_{ab}$, which are independent from
    $A$ by disjointness. With an analogous
    reasoning, we see that
    \[
	\Pr\Big(\nexists i \in B\cap[n]_p \text{ with } ai \in 
	[n]_p\Big) = \prod_{i \in B} \Pr(i \not \in B \text{ or } ai \not \in
	    [n]_p) \leq e^{-\delta p^2 n^{2/11}} = o(1).
    \]
    If there exists such an $i$, we set $b$ to be equal to it and carry out the
    third step.
\item In the third step, we reveal $C, I_{ac}, I_{abc}, I_{a^2bc}$ and
    $I_{a^2b^2c}$. Again, we have
    \[
	\Pr\Big(\nexists i \in C\cap[n]_p \text{ with } ai, abi, a^2bi, a^2b^2i
	\in [n]_p\Big) \leq e^{-\delta p^5 n^{5/11}} = o(1),
    \]
    and we let $c$ be such an $i$ if it exists. If there is one, we move to the
    final step.
\item In the last step, we reveal $D, I_{ad}, I_{bd}, I_{abd}, I_{a^2d}$ and
    $I_{a^2bd}$. Again,
    \[
	\Pr\Big(\nexists i \in D\cap[n]_p \text{ with } ai, bi, abi, a^2i, a^2bi
	\in [n]_p\Big) \leq e^{-\delta p^6 n^{7/11}} = o(1),
    \]
    and let $d$ be such an $i$ if it exists.
\end{itemize}
Union bounding over the complement,  we see that the probability that we may not
find such $a, b, c, d$ is $o(1)$. Therefore, a.a.s.\ there exists a set of the form
$S_{a, b, c, d}$ in $[n]_p$ as required.
\end{proof}

\section{More colours} \label{sec:more cols}
We now look at an arbitrary number of colours $r \geq 2$. Recall 
the (0)-statement in Theorem \ref{thm: more col}.
\begin{theorem}
    For $p = o\left(n^{-1/S'(r)}\right)$ the set $[n]_p$ is $r$-product-Schur asymptotically almost never.
\end{theorem}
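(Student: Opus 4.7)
The plan is to convert multiplication into addition by a logarithmic bucketing of $[n]$ into $S'(r)-1$ buckets, in such a way that any product $c = ab$ in $[n]$ translates into one of exactly two ``sum--like'' relations on the bucket labels—both of which can be forbidden by invoking the definition of $S'(r)$ directly.

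More precisely, I would first fix, by the definition of $S'(r)$, an $r$-colouring $\chi \colon [S'(r)-1] \to [r]$ with no monochromatic triple $(j,\ell,m) \in [S'(r)-1]^3$ satisfying either $j + \ell = m$ or $j + \ell = m - 1$. Next, set $T := n^{1/S'(r)}$ (possibly multiplied by a harmless constant larger than $1$ to guarantee $T^{S'(r)} > n$ and avoid edge effects at the upper endpoint) and define the buckets $I_j := [T^j, T^{j+1}) \cap [n]$ for $j = 1, 2, \dots, S'(r)-1$, which cover $[T, n]$. Colour every integer $x \in I_j$ with the colour $\chi(j)$.

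The key calculation is that if $a \in I_j$, $b \in I_\ell$ and $c = ab \in I_m$ with $a,b,c \in [T,n]$, then combining $j \le \log_T a < j+1$ with the analogous bounds for $b$ and $c$ gives $\log_T c = \log_T a + \log_T b \in [j+\ell,\, j+\ell+2)$, and intersecting with $[m, m+1)$ forces $m \in \{j+\ell,\, j+\ell+1\}$. In the first case the bucket-label triple $(j,\ell,m)$ realises $j + \ell = m$ in $[S'(r)-1]$, and in the second it realises $j + \ell = m - 1$; by choice of $\chi$, neither triple is monochromatic. Hence the above colouring is a valid $r$-colouring of all of $[T,n]$ avoiding monochromatic products.

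It remains to dispose of the small elements, which is done by a trivial union bound:
\[
    \Pr\bigl([n]_p \cap [1, T] \neq \varnothing\bigr) \le Tp = O\bigl(n^{1/S'(r)} \cdot p\bigr) = o(1),
\]
since $p = o(n^{-1/S'(r)})$. On this a.a.s.\ event, $[n]_p \subseteq [T, n]$, so the colouring above restricts to a valid $r$-colouring of $[n]_p$, and hence $[n]_p$ is a.a.s.\ not $r$-product-Schur. The proof is essentially bookkeeping once the correct bucketing is identified; the one point to watch—and the reason the exponent in the $(0)$-statement is $1/S'(r)$ rather than $1/S(r)$—is that the floor inherent in $\log_T$ introduces precisely the two relations $j+\ell = m$ and $j + \ell = m - 1$, so one genuinely needs the stronger ``double-sum'' avoidance rather than plain sum-avoidance to make the argument go through.
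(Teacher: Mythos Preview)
Your proof is correct and follows essentially the same approach as the paper: both use the logarithmic bucketing $I_j = (n^{j/S'(r)}, n^{(j+1)/S'(r)}]$, pull back a double-sum-free $r$-colouring of $[S'(r)-1]$ to these buckets, observe that $a\in I_j,\, b\in I_\ell$ forces $ab\in I_{j+\ell}\cup I_{j+\ell+1}$, and dispose of the interval $[1,n^{1/S'(r)}]$ by a union bound. The only cosmetic differences are the endpoint conventions and your explicit phrasing via $\log_T$.
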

\begin{proof}
    To begin with, note that
    \[
	\Pr\left([n]_p \cap [n^{1/S'(r)}] = \varnothing\right) =
	(1-p)^{n^{1/S'(r)}} \geq e^{-2pn^{1/S'(r)}} = 1-o(1).
    \]
    Therefore,
    \[
	\Pr\left([n]_p \cap [n^{1/S'(r)}] \neq \varnothing\right) = o(1).
    \]

Now let $\psi \colon [S(r)-1]\to [r]$ be some $r$-colouring of $[S'(r)-1]$
avoiding monochromatic sums and monochromatic shifted sums (solutions to
$a+b=c-1$). We colour the intervals of the form $I_i=(n^{i/S'(r)},
n^{(i+1)/S'(r)}]$ for $i\in [S'(r)-1]$ according to $\psi$. More concretely, we
colour all numbers in $I_i \cap [n]_p$ with the colour $\psi(i)$. Since a.a.s.\ all numbers in $[n]_p$ belong to one of these
intervals, all numbers end up coloured a.a.s..

Finally, we observe that, if $a\in I_\alpha$ and  $b\in I_\beta$, then $ab
\in I_{\alpha+\beta} \cup I_{\alpha+\beta+1}$. Therefore, if $a,b,ab$ have the
same colour, then $\psi(\alpha) = \psi(\beta)$ and $\psi(\alpha), \psi(\beta) \in \{ \psi(\alpha+\beta),
\psi(\alpha+\beta+1)\}$, which is impossible by the definition of $\psi$. This
proves that our colouring contains no monochromatic solutions to $xy=z$ in
$[n]_p$. We conclude by noting that this is a full colouring of $[n]_p$ a.a.s..
 % \cref{kc0s}}. For this, assume $p=n^{-1/S'(r)} o(n)$ for some $o(n) \to 0$.
 % We
 % use the fact that with high probability no number smaller than  appears in $[n]_p$. This is because we can bound that probability by \[(1-p)^{n^{1/S'(r)}} \sim e^{-pn^{1/S'(r)}} \to 1\]
\end{proof}

\begin{remark}\label{rmrk}
    For small values of $r$ we can in fact slightly improve the bound on the
    threshold with a variation of the idea. Notice that colouring $I_1$ and $I_2$ with the same colour does not generate monochromatic tuples, since a.a.s.\ no monochromatic tuples exist in $[p^{-3+o(1)}]_p$ by a union bound argument. 

    This trick allows for colourings $\psi$ that avoid all triples of the forms $a+b=c$ and $a+b+1=c$ except $1+1=2$. Computer calculations show that with $3$ colours we can colour up to\footnote{The construction here is  AABBBACCCACCCABBBA.} $18$ (while $S'(3)=13$) and with four colours up to\footnote{The construction here is  AABBBACCCACCCABBBADDDADDDABBBADDDADDDABBBACCCACCCABBBA.} $54$ (when $S'(4)=40$). This improves the bounds for $r\in\{3,4\}$ to $n^{-1/19}$ and $n^{-1/55}$, respectively. However, we have not found a better general construction using this idea.
\end{remark}

    %Although the proof above does not immediatly do better that $n^{-1/3^r}$,
    %we favor it both for its simplicity and for the chance that it can do
    %better. Given that there are bounds of form $S(r) \ge C(3+\varepsilon)^r$,
    %it seems plausible that a bound of the form $S'(r) \ge C(3+\varepsilon')^r$
    %is true as well. %However, the greedy algorithm might yield better bounds
    %for small $k$. 

    Finally, we prove the (1)-statement in Theorem \ref{thm: more col}. The idea of the
    proof is the same we used for the 1-statement of the two colour case, but
    with a more general pattern that applies to all $r$. We recall the statement.
\begin{theorem}
    For $p = \omega(n^{-1/S(r)^2})$, the set $[n]_p$ is $r$-product-Schur a.a.s..
\end{theorem}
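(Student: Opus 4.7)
The plan is to exhibit a small $r$-product-Schur pattern and show that $[n]_p$ a.a.s.\ contains a copy, mirroring the strategy of the two-colour (1)-statement but with a much simpler pattern. Specifically, for an integer $a \geq 2$ I will use
\[
P_a := \{a, a^2, \ldots, a^{S(r)}\}.
\]
Any $r$-colouring of $P_a$ induces an $r$-colouring $\psi \colon [S(r)] \to [r]$ via $\psi(i) := $ colour of $a^i$; by definition of the Schur number, $\psi$ must admit monochromatic $i, j, k$ with $i+j=k$, which produces the monochromatic (possibly degenerate) product $a^i \cdot a^j = a^k$ in $P_a$. Hence $P_a$ is $r$-product-Schur for every $a \geq 2$.

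Set $N := S(r)$ and $A := \mathbb{Z} \cap (n^{1/N}/2, n^{1/N}]$, so $|A| \geq n^{1/N}/3$ and $P_a \subseteq [n]$ for every $a \in A$. For each such $a$, let $E_a$ be the event that $P_a \subseteq [n]_p$; since $a, a^2, \ldots, a^N$ are $N$ distinct integers, $\Pr(E_a) = p^N$. The key observation, which I expect to be the main (mild) obstacle, is that the family $\{E_a\}_{a \in A}$ is mutually independent; this reduces to checking that $P_a \cap P_{a'} = \varnothing$ for distinct $a, a' \in A$. Indeed, if $a^i = (a')^j$ with $i, j \in [N]$, writing $g = \gcd(i,j)$, $i = gi'$, $j = gj'$ with $\gcd(i',j')=1$ gives $a^{i'} = (a')^{j'}$, which forces $a = c^{j'}$ and $a' = c^{i'}$ for some integer $c \geq 2$. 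But $a, a' \in A$ implies $a/a' \in (1/2, 2)$, while $a/a' = c^{j'-i'}$ with $c \geq 2$ can only lie in $(1/2, 2)$ when $j' = i'$, forcing (by coprimality) $i' = j' = 1$ and hence $a = a'$.

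Combining these observations with independence,
\[
\Pr\Big(\bigcap_{a \in A} \overline{E_a}\Big) = \prod_{a \in A}(1 - p^N) \leq \exp\big(-|A|\, p^N\big) \leq \exp\Big(-\tfrac{1}{3}\big(p \cdot n^{1/N^2}\big)^N\Big),
\]
which tends to $0$ since $p = \omega(n^{-1/N^2})$ implies $p \cdot n^{1/N^2} \to \infty$. Thus a.a.s.\ some $E_a$ occurs and $[n]_p$ contains the $r$-product-Schur set $P_a$, as required. Beyond the disjointness computation above, the rest is a direct first-moment/independence argument.
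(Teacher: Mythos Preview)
Your proof is correct and follows essentially the same strategy as the paper: both use the geometric progression $\{a,a^2,\ldots,a^{S(r)}\}$, observe it is $r$-product-Schur via the Schur number, and then find many pairwise disjoint such progressions so that independence gives the $(1)$-statement. The only real difference is how disjointness is arranged: the paper greedily extracts $X\subset [n^{1/S(r)}]$ of size at least $n^{1/S(r)}/S(r)^2$ using that each $U_x$ meets at most $S(r)^2$ others, whereas you restrict to the half-dyadic range $(n^{1/N}/2,n^{1/N}]$ and show all progressions there are automatically disjoint. Both devices yield $\Theta(n^{1/S(r)})$ disjoint copies and the same computation thereafter; your variant is a touch slicker in that no greedy step is needed, while the paper's counting argument avoids the short number-theoretic lemma about $a^i=(a')^j$.
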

\begin{proof}
Consider the pattern
\[
U_x = \{x,x^2,\ldots,x^{S(r)}\}
\]
for $2\le x \le n^{1/S(r)}$. Note that $U_x$ is not $r$-product-colourable by
the definition of $S(r)$. We now show that, for our range of $p$, a.a.s.\ there is
$U_x \subset [n]_p$ for some $x \in [n]_p$.

Consider the family of sets $\{U_x \mid x\in X\}$ for the set $X\subset
[n^{1/S(r)}]$ built greedily in the following manner. Iterate through values of $x\in [n^{1/S(r)}]$ and add $U_x$ if it is disjoint from all other $U_y$ for $y$ already belonging to the set. For any given $x \in [n^{1/S(r)}]$, there are at most $S(r)^2$ values $y \in [n^{1/S(r)}]$ such that $U_x \cap U_y \neq \varnothing$, so $X$ has size at least $\frac{1}{S(r)^2} n^{1/S(r)}$.

Note that for any given $x$ we have $\mathbb P(U_x\subset [n]_p) = p^{S(r)}$,
and that the events $U_x\subset [n]_p$ for $x\in X$ are mutually independent on
account of the sets $U_x$ being disjoint. Hence, the probability that $U_x \not
\subset [n]_p$ for every $x \in X$ satisfies
\begin{align*}
\mathbb P\left(\bigwedge_{x\in X} U_x\not\in [n]_p\right) &= \left(1-p^{S(r)}\right)^{\vert X\vert}\\
&\le 
\left(1-p^{S(r)}\right)^{\frac{1}{S(r)^2} n^{1/S(r)}} \\
&\le \exp\left(-p^{S(r)}\frac1{S(r)^2} n^{1/S(r)}\right) = o(1).
\end{align*}
Hence, $[n]_p$ contains $U_x$ for some $x\in[n]_p$ a.a.s., and such a pattern is $r$-product-Schur, so $[n]_p$ also is.
\end{proof}

\section{Concluding remarks} \label{sec:conclude}

In this paper, we showed that the threshold for the random set $[n]_p$ lies between $n^{-1/9-o(1)}$ and $n^{-1/11}$. We believe that our upper bound provides the true location of the threshold. 

\begin{conjecture} \label{conj:2col}
    The threshold for $[n]_p$ to be product-Schur lies at $n^{-1/11}$. 
\end{conjecture}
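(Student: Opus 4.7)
The plan is to close the gap between the current bounds by improving the $(0)$-statement of Theorem \ref{thm: 2 col} from $p \leq n^{-1/9-o(1)}$ to $p = o(n^{-1/11})$; the matching $(1)$-statement has already been proved in Section \ref{sec:1-statement}. I would pursue this by refining the greedy approach of Section \ref{sec:0-statement}.

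The first step is to design a more permissive colouring procedure. Algorithm \ref{alg1} forbids $R \cdot R$, $R \cdot R \cdot R$, and $B \cdot B$, and its failure certifies a forbidden configuration of generic effective size $9$, which is what yields the bound $n^{-1/9}$. To push to $n^{-1/11}$, the algorithm should be modified so that its failure forces a much richer configuration --- conjecturally, one that contains (a variant of) the $14$-element pattern $S_{a,b,c,d}$ used in the $(1)$-statement. A concrete candidate is a two-phase algorithm: first optimally colour an initial segment $[n^{\alpha}] \cap [n]_p$ for an appropriate $\alpha$; then extend using a more symmetric greedy rule that tracks longer products of both colours simultaneously, possibly reassigning colours when local conflicts arise.

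The second step would mirror the casework of Proposition \ref{lem:NoForbidden}. A new notion of forbidden configuration should have generic effective size at least $14$, built around four seed parameters $a, b, c, d$ lying in logarithmically-scaled intervals as in Section \ref{sec:1-statement}. Splitting configurations by effective size and by the coincidence pattern among the monomials, and applying Lemma \ref{ei} with $t = n^{1/11}$, one would hope to bound the generic count by $n^{15/11} D(n)^{14}$, yielding an expected number at most $n^{15/11} D(n)^{14} p^{14} = o(1)$ for $p = o(n^{-1/11})$. Degenerate subcases, in which some monomials collide or equal $1$, would be treated individually using perfect-power constraints on the products, as is done extensively in Section \ref{sec:0-statement}.

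The main obstacle is the first step. The gap between $n^{-1/9}$ and $n^{-1/11}$ is not a purely analytic loss but seems to reflect a structural limitation of any local greedy rule: simply forbidding longer products (say, $R\cdot R\cdot R\cdot R$) creates new obstructions as badly as it eliminates old ones. A successful algorithm likely needs global information, for example by branching on which small divisors are present in $[n]_p$ or by solving a matching-type subproblem at each step in order to guarantee that the only failure certificates really are of size $\geq 14$ and density $\geq 1/11$. An alternative would be to abandon the algorithmic route in favour of a semi-random colouring combined with a Lovász Local Lemma argument over the product-triples, but the lack of supersaturation for monochromatic products emphasised in the introduction makes transference-style approaches unavailable and renders any such probabilistic attempt quite delicate.
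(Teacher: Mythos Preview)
This statement is a \emph{conjecture}, and the paper does not prove it. The concluding remarks motivate it by two sub-conjectures (that the threshold is governed by the appearance of some bounded-size product-Schur configuration, and that $S_{a,b,c,d}$ is optimal among such configurations), and the authors explicitly write that while ``it might be possible to slightly improve the lower bound using a more involved greedy colouring scheme,'' a ``significant improvement on the lower bound would probably require new ideas.'' So there is no paper proof to compare against.

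Your proposal is not a proof but a research programme, and you essentially acknowledge this yourself when you identify the first step as ``the main obstacle.'' The critical gap is exactly there: you have not actually exhibited a colouring algorithm whose failure certificates are configurations of effective size at least $11$ (let alone $14$). The analogy with Algorithm~\ref{alg1} is instructive but also cautionary: the jump from effective size $8$ to $9$ required the non-obvious extra clause $k\notin R\cdot R\cdot R$, and you correctly observe that simply forbidding longer products introduces new small obstructions rather than eliminating them. Without a concrete algorithm and a concrete definition of ``forbidden configuration,'' the second step (the casework and the application of Lemma~\ref{ei} with $t=n^{1/11}$) cannot even be formulated, because the list of cases to analyse is determined entirely by the structure of the failure certificates. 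The specific target count $n^{15/11}D(n)^{14}$ you write down is not derived from anything; in the paper the exponent on $n$ in the configuration count comes from the multiplicative constraints imposed by the algorithm, not from the number of variables, so there is no reason to expect that number without first having the algorithm in hand. In short, the proposal identifies the right shape of an argument but does not supply the one genuinely new idea that would be needed, which is consistent with the paper's own assessment that the conjecture is open.
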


This conjecture is motivated by  two sub-conjectures, each of which could provide natural steps towards its resolution. Firstly, we believe that the threshold is determined by the appearance in $[n]_p$ of some bounded size configuration which is itself product-Schur. Secondly, we believe that the configuration used in our proof of Theorem \ref{thm: 2 col} (1) is optimal, that is, there is no other bounded size configuration that is product-Schur and appears at a density less than $p=n^{-1/11}$. 
As a first step towards Conjecture \ref{conj:2col}, it would be interesting to improve on the lower bound of $n^{-1/9-o(1)}$. We believe that it might be possible   to slightly improve the lower bound using a  more involved greedy colouring scheme, but getting a significant improvement on the lower bound would probably require new ideas. 

For the $r$-product-Schur problem with more colours, we showed that the threshold $p_r^*$ lies between $n^{-S'(r)}$ and $n^{-S(r)^2}$ where $S(r)$ is the Schur number and $S'(r)$ is the double-sum Schur number. We remark that one could also try to use the idea for our $2$-colour (0)-statement to get a lower bound in the $r$-colour case. Indeed, the idea there was to take a greedy colouring and show that it a.a.s.\ does not fail to colour all the integers. One can show by induction that there are $3^r$ integers in a (non-degenerate) configuration which witnesses the failure of the algorithm and so one cannot expect a better bound than $p_r^*\geq n^{-1/3^r}$ using this approach. As
$S'(r)\geq 3S(r-1)-1=\Omega(3.17176^r)$  \cite{AbbottHanson,SchurLowerBound}, this does worse than the lower bound we give here for $r$ sufficiently large. 

It would be very interesting to close the gap on our knowledge of $\log_n{1/p_r^*}$ which we know is between exponential and factorial in $r$. Given that this is also the limit of our knowledge on the growth of Schur numbers \cite{SchurLowerBound,SchurUpperBound}, this seems like it could be a considerable challenge. Nonetheless, it may be possible to improve the upper and lower bounds of $\log_n{1/p_r^*}$ in terms of the Schur number $S(r)$. Finally we remark that it would be natural to study other nonlinear equations in this random Ramsey setting and we believe our techniques developed here will be useful for this. In particular, some pertinent candidates are  $a^2=bc$  (with $b\neq c$) 
and $ab=cd$ (with $\{a,b\}\neq \{c,d\}$).

\subsection*{Acknowledgements}
Part of this work was done during  the  \textit{Barcelona Introduction to Mathematical Research -- (BIMR 2024)}   and we would like to thank BGSMath and  Centre de Recerca Matem\`atica (CRM) for organising this summer programme. We also thank Shagnik Das, Letícia Mattos and Juanjo Ru\'e for interesting discussions around this topic, and Jonathan Chapman for helpful comments on a previous version of the manuscript.

\bibliographystyle{amsplain} % We choose the "plain" reference style
\bibliography{refs} % Entries are in the refs.bib file
\end{document}